\definecolor{titlepagecolor}{cmyk}{1,.60,0,.40}
\patchcmd{\subsection}{\normalfont}{\normalfont\color{black}}{}{}
\DeclareFixedFont{\titlefont}{T1}{ppl}{b}{it}{0.5in}
\def\th@plain{%
  \thm@notefont{}
  \itshape 
}
\def\th@definition{%
  \thm@notefont{}
  \normalfont 
}
\theoremstyle{definition}
\newtheorem{definition}{Definition}[section]
\newtheorem{theorem}[definition]{Theorem}
\newtheorem{prop}[definition]{Proposition}
\newtheorem{remark}{Remark}
\newcommand{\Lc}{\mathcal{L}}
\newcommand{\Mc}{\mathcal{M}}
\newcommand{\Rc}{\mathcal{R}}
\newcommand{\Tc}{\mathcal{T}}
\newcommand{\Vc}{\mathcal{V}}
\newcommand{\vx}{\textbf{\textit{x}}}
\newcommand{\vv}{\textbf{\textit{v}}}
\newcommand{\vf}{\textbf{\textit{f}}}
\newcommand{\I}{\mathrm{i}}
\newcommand{\D}{\mathrm{d}}
\newcommand{\vuu}{\textbf{\textit{u}}}
\newcommand{\Db}{\mathbb{D}}
\newcommand{\Rb}{\mathbb{R}}
\newcommand{\Zb}{\mathbb{Z}}
\title{\vspace{-1cm} Tensor tomography using V-line transforms with vertices restricted to a circle}
\author{Rohit Kumar Mishra\thanks{Department of Mathematics, Indian Institute of Technology, Gandhinagar, Gujarat, India. \url{rohit.m@iitgn.ac.in}, \url{rohittifr2011@gmail.com} }\and Anamika Purohit\thanks{Department of Mathematics, Indian Institute of Technology, Gandhinagar, Gujarat, India.
\url{anamika.purohit@iitgn.ac.in }}
\and Indrani Zamindar\thanks{Department of Mathematics, Indian Institute of Technology, Gandhinagar, Gujarat, India. \url{indranizamindar@iitgn.ac.in}}}
\begin{document}
\maketitle
\begin{abstract}
In this article, we study the problem of recovering symmetric $m$-tensor fields (including vector fields) supported in a unit disk $\Db$ from a set of generalized V-line transforms, namely longitudinal, transverse, and mixed V-line transforms, and their integral moments. We work in a circular geometric setup, where the V-lines have vertices on a circle, and the axis of symmetry is orthogonal to the circle. We present two approaches to recover a symmetric $m$-tensor field from the combination of longitudinal, transverse, and mixed V-line transforms. With the help of these inversion results, we are able to give an explicit kernel description for these transforms. We also derive inversion algorithms to reconstruct a symmetric $m$-tensor field from its first $(m+1)$ moment longitudinal/transverse V-line transforms.
\end{abstract}
\section{Introduction}
The V-line transform is a generalization of the Radon transform that maps a function to its integral along V-shaped trajectories. The study of the V-line transform, and its generalizations in different geometric settings is an active field of research due to their appearance in various imaging fields, viz, single scattering optical tomography \cite{FMS-PhysRev-10, FMS-11, FMS-09}, single scattering x-ray tomography \cite{Kats_Krylov-15}, single photon emission computed tomography \cite{basko1997analytical,basko1998application}, etc., where these transforms serve as a mathematical basis of the imaging models. Also, in many cases, researchers study such transformations out of mathematical interest due to intriguing connections with other areas of mathematics, viz, PDEs, microlocal analysis, differential geometry, etc. For a detailed discussion of the subject, please see the recent book \cite{amb-book}, where the author discusses several such generalized Radon transforms and their applications in various imaging modalities. 
 
The study of emission tomography, especially single photon emission computed tomography (SPECT), involves Compton cameras and weakly radioactive tracers. The data obtained by a Compton camera is an integral of the tracer distribution along the cones with vertices on the scatter detector. For more details on imaging with Compton camera and its applications, please see  \cite{singh1983-partI, singh1983-partII, Fatma_Kuchment_Kunyansky_2018} and references therein.
To the best of our knowledge, the three-dimensional Conical Radon Transform (CRT) was first studied by Cree and Bones \cite{cree1994towards}. Later in  \cite{Moon_Haltmeir_CRT_2017}, the authors considered the CRT in three dimensions with vertices on a cylinder and the axis of symmetry pointing to the axis of the cylinder; in \cite{schiefeneder2017radon}, an $n$-dimensional CRT with vertices on a sphere and the axis of symmetry orthogonal to the sphere is studied. In \cite{Kuchment_Fatma_2017_divegentbeam_crt}, the authors took weighted cone transforms and derived inversion formulas by considering the detectors/sources lie on a submanifold which satisfies the Tuy's condition. Several authors studied the analytical reconstructions from the conical transform and gave their numerical validation with different geometric setups; please see \cite{Kuchment_homeland,gouia2014exact, haltmeier2014exact,jung2015inversion, jung2016exact, moon2016determination, nguyen2005radon,schonfelder1993_astro} and references therein.
 
The authors in \cite{basko1997analytical} proposed one-dimensional Compton camera imaging, where the conical surfaces reduce to  V-lines. Therefore, in this setup, the obtained integral data is an average of the gamma-ray distribution over the V-shaped lines, which is also known as the V-line transform of the unknown distribution function. The V-line transform of a scalar function in a circular geometry setup, where the vertex lies on a circle and the symmetry axis is orthogonal to the circle, has been investigated in \cite{Moon_Haltmeir_CRT_2017}. The attenuated V-line transform in the same setup has been considered in \cite{Moon_Haltmeir_Daniela}. The V-line transform with vertices on a line and fixed axis of symmetry has been studied in \cite{Truong-v-line}.
In \cite{Ambartsoumian_2012, Ambartsoumian_2013, Gaik_Souvik_numerics}, the V-line transform is considered in a circular geometry setup where the rays enter radially and, after traveling a distance, scatter at a fixed angle. A similar setup has been considered in a recent work \cite{bhardwaj2024inversion} for the recovery of vector fields using a set of generalized V-line transforms. The V-line transform and its generalizations for scalar/vector/tensor fields with fixed opening angles, fixed axis of symmetry, and variable vertex locations have been studied in a series of recent works \cite{amb-lat_2019, Amb_Lat_star, Gaik_Mohammad_Rohit, Gaik_Mohammad_Rohit_numerics, Gaik_Rohit_Indrani, Gaik_Rohit_Indrani_numerics}. In addition to these geometric settings, many authors investigated the broken ray/V-line transform when the domain contains an obstacle (known as a reflector). The ray enters the domain and gets reflected from the boundary of the obstacle, which generates a broken ray/V-line transform; please see \cite{Ilmavirta_tensor, Ilmavirta_function} and references therein.

As a natural extension of \cite{Moon_Haltmeir_CRT_2017}, we consider the question of recovering a symmetric $m$-tensor field from its generalized V-line transforms, where the vertex lies on a circle and axis of symmetry is orthogonal to the circle, that is, it passes through the center of the circle. To the best of our knowledge, no result is known for such generalization in this setup. In this work, we define longitudinal, transverse, and mixed V-line transforms and their integral moments for a symmetric $m$-tensor field. We derive various inversion formulas using different combinations of the defined transforms. Our analysis primarily relies on establishing relations between the defined generalized V-line transforms and the classical Radon transform, and then, using the known Fourier inversions of the Radon transform, we derive algorithms to reconstruct a symmetric $m$-tensor field explicitly.

The rest of the article is organized as follows. In Section \ref{sec: defs and notations}, we introduce the necessary notations and define the generalized V-line transforms of our interest. Section \ref{sec: Main_result} presents the main results of this article along with a brief discussion about them. In Section \ref{sec: decomp recovery} and \ref{sec: Compwise recovery}, we present two different approaches for the recovery of a symmetric $m$-tensor field from the combinations of longitudinal, transverse, and mixed V-line transforms. Additionally, using derived inversion results, we give an explicit kernel description for these transforms. Section \ref{sec: moments recovery} is devoted to recovering a symmetric $m$-tensor field from its first $(m+1)$ moment longitudinal/transverse V-line transforms. We conclude the article with acknowledgments in Section \ref{sec: acknowledge}.

\section{Definitions and notations}\label{sec: defs and notations}
The unit disk centered at the origin in $\Rb^2$ is denoted by $\Db$.
For $m\geq 1$, let $S^m(\Db)$ denote the space of symmetric $m$-tensor fields defined in $\Db$ (here $m=1$ corresponds to vector fields), and $C_c^{\infty}(S^m(\Db))$ be the collection of infinitely differentiable, compactly supported, symmetric $m$-tensor fields. Throughout the paper, we use bold font letters to denote vector and tensor fields in $\Rb^2$ and regular font letters to denote scalar functions. In local coordinates, $\vf\in C_c^{\infty}(S^m(\Db))$ can be expressed as 
$$\vf(\vx) = f_{i_1\dots i_m}(\vx)dx^{i_1} \dots  dx^{i_m},$$
where $f_{i_1\dots i_m}(\vx)$ are compactly supported smooth functions which are symmetric in their indices.  Note that the sum over all the repeated indices $i_1, \dots, i_m$   is assumed (Einstein summation convention) here and in the upcoming text. 
The scalar product in $S^m(\Db)$ is defined by the formula
$$
\langle f(\vx), g(\vx)\rangle=f_{i_1 \ldots i_m}(\vx) g_{i_1 \ldots i_m}(\vx).
$$
The classical gradient ($\D$), and its orthogonal operator ($\D^\perp$) for a scalar function $V(x_1, x_2)$ and divergence ($\delta$) and corresponding orthogonal operator ($\delta^\perp$) for a vector field $\vf =(f_1,f_2)$ are defined as follows: 
\begin{align*}
\D V = \left(\frac{\partial V}{\partial x_1}, \frac{\partial V}{\partial x_2}\right), \ \  \D^\perp V = \left(-\frac{\partial V}{\partial x_2}, \frac{\partial V}{\partial x_1}\right), \ \    \delta \vf =  \frac{\partial f_1}{\partial x_1}+ \frac{\partial f_2}{\partial x_2},\ \  
\delta^\perp \vf =  \frac{\partial f_2}{\partial x_1}- \frac{\partial f_1}{\partial x_2}.
\end{align*}
\noindent There is a natural generalization of these differential operators to higher-order symmetric $m$-tensor fields. More precisely, we have maps $\D, \D^\perp: C_{c}^{\infty}(S^{m}(\Db))\rightarrow C_c^{\infty}(S^{m+1}(\Db))$ and $\delta, \delta^\perp: C_{c}^{\infty}(S^{m}(\Db))\rightarrow C_c^{\infty}(S^{m-1}(\Db))$ that are defined in the following way (see \cite{derevtsov3} for a detailed discussion on this):
\begin{align*}
(\D\vf)_{i_1\dots i_{m+1}} &= \frac{1}{m+1}\left( \frac{\partial f_{i_1\dots i_{m}}}{\partial x_{i_{m+1}}}+ \sum_{k=1}^{m} \frac{\partial f_{i_1\dots i_{k-1}i_{m+1}i_{k+1}\dots i_{m}}}{\partial x_{i_k}}\right)\\
(\D^\perp \vf)_{i_1\dots i_{m+1}} &= \frac{1}{m+1}\left((-1)^{i_{m+1}} \frac{\partial f_{i_1\dots i_{m}}}{\partial x_{3-{i_{m+1}}}}+ \sum_{k=1}^{m} (-1)^{i_k} \frac{\partial f_{i_1\dots i_{k-1}i_{m+1}i_{k+1}\dots i_{m}}}{\partial x_{3-{i_k}}}\right)\\
(\delta \vf)_{i_1\dots i_{m-1}} &= \frac{\partial f_{i_1\dots i_{m-1}1}}{\partial x_1} + \frac{\partial f_{i_1\dots i_{m-1}2}}{\partial x_2} =  \frac{\partial f_{i_1\dots i_{m-1}i_m}}{\partial x_{i_m}}\\
(\delta^\perp \vf)_{i_1\dots i_{m-1}} &= -\frac{\partial f_{i_1\dots i_{m-1}1}}{\partial x_2} + \frac{\partial f_{i_1\dots i_{m-1}2}}{\partial x_1} = (-1)^{i_m} \frac{\partial f_{i_1\dots i_{m-1}i_m}}{\partial x_{3-{i_m}}}.
\end{align*}
For $\phi \in [0,2\pi)$, let $ \Phi(\phi):= (\cos\phi, \sin\phi)$ denotes the unit vector corresponding to the polar angle $\phi$. Further, for $\psi \in (0,\pi/2)$, we define two linearly independent unit vectors $\vuu (\phi, \psi) = \Phi(\pi + \phi - \psi)$ 
and $ \vv(\phi, \psi)  = \Phi(\pi + \phi + \psi)$. 
Given $(\phi, \psi) \in [0, 2\pi) \times (0, \pi/2)$, we consider a V-line starting at $\Phi(\phi)$ with one branch along $\vuu (\phi, \psi)$ and the other branch along $\vv (\phi, \psi)$, see Figure \ref{fig1(a)}. Therefore, $(\phi, \psi)$ parametrize the space of $V$-lines (considered in this article) as follows: a V-line for us is a union of two rays starting at $\Phi(\phi)$ in the directions $\vuu(\phi, \psi)$ and $\vv(\phi, \psi)$, respectively, that is, a V-line can be written as (see Figure \ref{fig1(a)})
$$ V(\phi, \psi):=\{\Phi(\phi)+t\vuu(\phi, \psi): 0\le t <\infty \}\cup \{\Phi(\phi)+t\vv(\phi, \psi): 0\le t <\infty \}.$$
\begin{figure}[H] 
\centering
\includegraphics[width=1.0\textwidth]{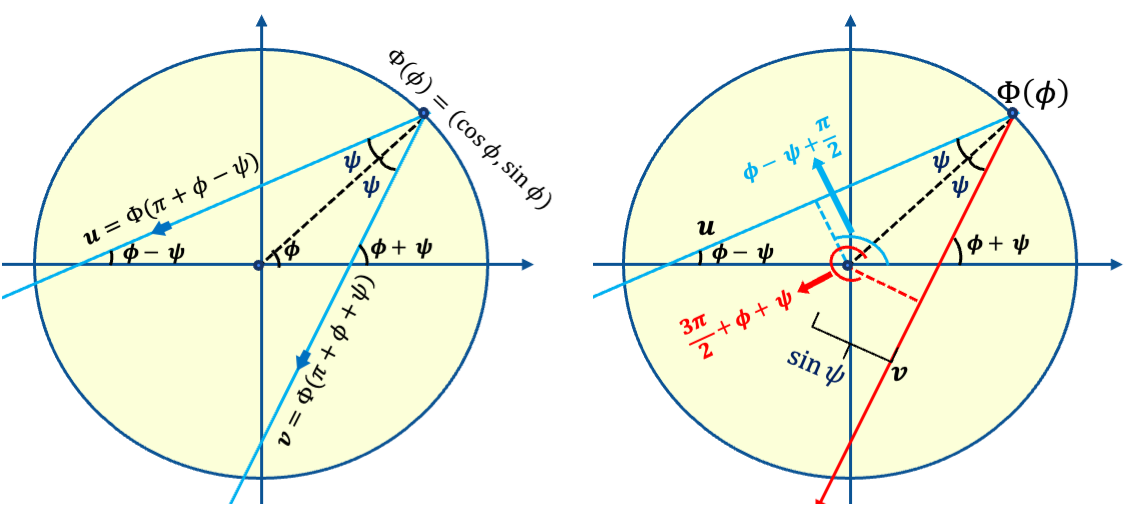}
\caption{A V-line with vertex $\Phi(\phi)=(\cos{\phi},\sin{\phi})$ and directions $\vuu =  \Phi(\pi + \phi - \psi)$ and $\vv =  \Phi(\pi + \phi + \psi)$. Both the rays are at a distance $s = \sin \psi$ from the origin and are orthogonal to unit vectors $ \Phi(\phi - \psi+ \pi/2)$ and $ \Phi(\phi + \psi+ 3\pi/2) = \Phi(\phi + \psi -\pi/2)$, respectively.} \label{fig1(a)}
\end{figure}
\vspace{-0.5cm}
\noindent   Below, we present a very brief discussion on the Radon transform in $\Rb^2$ and two inversion methods involving Fourier series expansions. 
\begin{definition}\label{regular Radon}\cite{natterer2001mathematics}
For $h \in C_c^\infty(\Db)$, we define the \textbf{Radon transform} of $h$ in the following way:
\begin{align}
\Rc h(s,\phi) = \Rc h\left(s,\Phi(\phi)\right)= \int_{-\infty}^{\infty}h\left(s\Phi(\phi)+t\Phi(\phi)^\perp\right)\,dt, \quad \phi \in [0, 2\pi),\quad  s \in \Rb.
\end{align}
\end{definition}

\noindent Let us expand the unknown function $h$, and Radon transform $\Rc h $ using the Fourier series with respect to the angular variables as follows: 
 \begin{align}\label{fourier component function}
h(r\Phi(\phi))&=\sum_{n\in \Zb} h_n(r)e^{\I n\phi}, \qquad \ \  \text{with} \qquad \ \ \  h_n(r)= \frac{1}{2\pi}\int_0^{2\pi}h(r\Phi(\phi)) e^{-\I n\phi}\,d\phi.\\\label{fourier component known data}
\Rc h(s,\phi)&=\sum_{n\in \Zb} (\Rc h)_n(s)e^{\I n\phi}, \quad \text{with} \quad (\Rc h)_n(s)= \frac{1}{2\pi}\int_0^{2\pi}(\Rc h)(s,\phi) e^{-\I n\phi}\,d\phi.
 \end{align}
Then, there are known explicit inversion formulas for the Radon transform to recover $h_n$ in terms of $(\Rc h)_n$ derived by A.M. Cormack \cite{cormack1963representation} and R.M. Perry \cite{perry1975reconstructing}, respectively:
\begin{align}\label{Recovery formula-I}
h_n(r) = -\frac{1}{\pi}\int_r^1 \frac{d(\Rc h)_n(s)}{d s} \frac{T_{|n|}(s/r)}{\sqrt{s^2-r^2}}\, ds
\end{align}
and
\begin{align}\label{Recovery formula-II}
h_n(r) =- \frac{1}{\pi r}\bigg[\int_r^1 \frac{d (\Rc h)_n(s)}{d s} \frac{\left[s/r+ \sqrt{s^2/r^2-1}\right]^{-|n|}}{\sqrt{s^2/r^2- 1}}\, ds - \int_0^r\frac{d (\Rc h)_n(s)}{d s} U_{|n|-1}(s/r)\, ds\bigg],
\end{align}
where for $k\ge 0$, $T_k(x), U_k(x)$ denote Chebyshev polynomials of the first and second kind, respectively, and are defined as
\begin{align*}
 T_k(x) &= \begin{cases}
   \cos(k \cos^{-1}(x)), & |x|\le1 \\
 \cosh(k\cosh^{-1}(x)),  &  \ \ x > 1\\
 (-1)^k\cosh(k \cosh^{-1}(-x)), & \ \  x < -1
	\end{cases}\\
 U_k(x) &= \begin{cases}
   \sin((k+1) \cos^{-1}(x))/\sin(\cos^{-1}(x)), & |x|\le1 \\
 \sinh((k+1)\cosh^{-1}(x))/\sinh(\cosh^{-1}(x)), & \ \  x > 1\\
(-1)^k \sinh((k+1)\cosh^{-1}(-x))/\sinh(\cosh^{-1}(-x)) , &\ \  x<-1
	\end{cases}
\end{align*}
with $U_{-1}=0.$

In addition to these inversion formulas, we will also use the following property of Radon transform repeatedly throughout the article:
\begin{align}
    \Rc\left(\frac{\partial h}{\partial x_k}\right)(s,\Phi(\phi))= \left(\Phi(\phi)\right)_{k}\frac{\partial}{\partial s}\Rc h\left(s,\Phi(\phi)\right), \quad k=1,2.
\end{align}
Next, we introduce a weighted V-line transform of a scalar function $h$ and its inversion formula using the inversion of the Radon transform discussed above. 
\begin{definition}\label{weighted V-line}
For $h \in C_c^\infty(\Db)$, a \textbf{weighted V-line transform} of $h$ is defined by
\begin{align}
\Vc_{w}h(\phi,\psi)= c_1\int_{0}^{\infty} h(\Phi(\phi)+t \vuu)\,dt +c_2 \int_{0}^{\infty} h(\Phi(\phi)+t \vv)\,dt,
\end{align}
where $c_1$ and $c_2$ are non-zero constants. 
\end{definition}
The following result gives two inversion formulas to recover a scalar function from its weighted V-line transform. These formulas are derived directly using the inversion formulas \eqref{Recovery formula-I} and \eqref{Recovery formula-II}. Still, we present a quick proof for the sake of completeness and also because we will use them later.
\begin{theorem}
    Let $h\in C^{\infty}_c(\Db)$, then $h$ can be recovered from $\Vc_{w}h$ from any of the following inversion formulas:
    \begin{align}\label{weighted Recovery formula-I}
h_n(r) = -\frac{1}{\pi}\int_r^1 \frac{d}{d s}\left[
 \frac{(\Vc_{w}h)_{n}(\sin^{-1}{s})}{c_1 e^{-\I n(\sin^{-1}{s}- \pi/2)} +c_2  e^{\I n(\sin^{-1}{s}- \pi/2)}}\right]\frac{T_{|n|}(s/r)}{\sqrt{s^2-r^2}}\, ds,
\end{align}
\begin{align}\label{weighted Recovery formula-II}
h_n(r) =- \frac{1}{\pi r}\bigg[\int_r^1 \frac{d}{d s}\left[
 \frac{(\Vc_{w}h)_{n}(\sin^{-1}{s})}{c_1 e^{-\I n(\sin^{-1}{s}- \pi/2)} +c_2  e^{\I n(\sin^{-1}{s}- \pi/2)}}\right] \frac{\left[s/r+ \sqrt{s^2/r^2-1}\right]^{-|n|}}{\sqrt{s^2/r^2- 1}}\, ds \nonumber\\ - \int_0^r\frac{d}{d s}\left[
 \frac{(\Vc_{w}h)_{n}(\sin^{-1}{s})}{c_1 e^{-\I n(\sin^{-1}{s}- \pi/2)} +c_2  e^{\I n(\sin^{-1}{s}- \pi/2)}}\right] U_{|n|-1}(s/r)\, ds\bigg],
\end{align}
 where  $h_n$ and $(\Vc_{w}h)_{n}$ are the $n^{th}$ Fourier coefficients of the $h$ and $\Vc_{w}h$, respectively. 
\end{theorem}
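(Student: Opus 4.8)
The plan is to reduce the weighted V-line transform to the classical Radon transform, pass to Fourier coefficients in the angular variable, and then invoke the Radon inversion formulas \eqref{Recovery formula-I} and \eqref{Recovery formula-II}.

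\textbf{Reduction to the Radon transform.} Fix $(\phi,\psi)\in[0,2\pi)\times(0,\pi/2)$. The branch of $V(\phi,\psi)$ emanating from $\Phi(\phi)$ along $\vuu(\phi,\psi)=\Phi(\pi+\phi-\psi)$ lies on the line with unit normal $\Phi(\phi-\psi+\pi/2)$, whose signed distance from the origin is $\langle\Phi(\phi),\Phi(\phi-\psi+\pi/2)\rangle=\sin\psi$; likewise the branch along $\vv(\phi,\psi)=\Phi(\pi+\phi+\psi)$ lies on the line with normal $\Phi(\phi+\psi-\pi/2)$ at the same distance $\sin\psi$ (this is recorded in the caption of Figure \ref{fig1(a)}). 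The point is that the vertex $\Phi(\phi)$ sits on $\partial\Db$ and both direction vectors point into $\Db$, since $\langle\vuu(\phi,\psi),\Phi(\phi)\rangle=\langle\vv(\phi,\psi),\Phi(\phi)\rangle=-\cos\psi<0$; hence each half-line starting at $\Phi(\phi)$ traverses the whole chord that its supporting line cuts off from $\overline{\Db}$, and therefore all of $\mathrm{supp}\,h$ on that line. It follows that each branch integral is a single value of $\Rc h$, and
\[
\Vc_w h(\phi,\psi)=c_1\,\Rc h(\sin\psi,\Phi(\phi-\psi+\pi/2))+c_2\,\Rc h(\sin\psi,\Phi(\phi+\psi-\pi/2)).
\]

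\textbf{Fourier coefficients and inversion.} Substituting the expansion \eqref{fourier component known data} and reading off the coefficient of $e^{\I n\phi}$ (the angular shifts $\phi\mapsto\phi\mp(\psi-\pi/2)$ contribute only the scalar factors $e^{\mp\I n(\psi-\pi/2)}$), then setting $s=\sin\psi$ with $\psi=\sin^{-1}s\in(0,\pi/2)$, one obtains
\[
(\Vc_w h)_n(\sin^{-1}s)=\Big(c_1 e^{-\I n(\sin^{-1}s-\pi/2)}+c_2 e^{\I n(\sin^{-1}s-\pi/2)}\Big)(\Rc h)_n(s).
\]
Dividing by the bracketed factor expresses $(\Rc h)_n(s)$ in terms of $(\Vc_w h)_n$ — precisely the quantity appearing in square brackets in \eqref{weighted Recovery formula-I} and \eqref{weighted Recovery formula-II}. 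Inserting this expression for $(\Rc h)_n$ into Cormack's formula \eqref{Recovery formula-I} gives \eqref{weighted Recovery formula-I}, and into Perry's formula \eqref{Recovery formula-II} gives \eqref{weighted Recovery formula-II}.

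The step that needs genuine care is the geometric one: one must check that a ray issued from the boundary vertex $\Phi(\phi)$ along $\vuu$ (or $\vv$) really sweeps out the complete intersection of its supporting line with $\overline{\Db}$, so that no portion of the line integral is lost and the half-line integral coincides with the full Radon integral evaluated at the single radius $s=\sin\psi$; everything after that is bookkeeping with the Fourier series and the quoted Radon inversions. One should also record that the formulas are meaningful only at those $(n,s)$ for which $c_1 e^{-\I n(\sin^{-1}s-\pi/2)}+c_2 e^{\I n(\sin^{-1}s-\pi/2)}\neq 0$.
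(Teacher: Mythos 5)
Your proposal is correct and follows essentially the same route as the paper's proof: rewrite $\Vc_w h$ as a weighted sum of two Radon transform values via equation \eqref{eq:relation between Radon and W V line}, match Fourier coefficients to solve for $(\Rc h)_n$, and then apply \eqref{Recovery formula-I} and \eqref{Recovery formula-II}. The only additions are your explicit justification of the geometric step (that each boundary-vertex ray sweeps the entire chord) and the remark about the denominator needing to be nonzero — both of which the paper leaves implicit.
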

\begin{proof}
Since $h$ is compactly supported inside the disk $\Db$ and vertices of V-lines are only on the boundary of $\Db$, we can express weighted V-line transform in terms of the Radon transform as follows: 
\begin{align}\label{eq:relation between Radon and W V line}
     \Vc_{w}h(\phi,\psi) 
     = c_1\Rc h\left(\sin{\psi},\phi-\psi+\frac{\pi}{2}\right)+c_2 \Rc h\left(\sin{\psi},\phi+\psi-\frac{\pi}{2}\right).
\end{align}
Using the definition of Fourier coefficients (defined above in equations \eqref{fourier component function} and \eqref{fourier component known data}), we have
\begin{align*}
   (\Vc_{w}h)_{n}(\psi)&=c_1 (\Rc h)_{n}(\sin{\psi})e^{-\I n(\psi- \pi/2)} +c_2 (\Rc h)_{n}(\sin{\psi}) e^{\I n(\psi- \pi/2)} \\
   &= \left(c_1 e^{-\I n(\psi- \pi/2)} +c_2  e^{\I n(\psi- \pi/2)}\right) (\Rc h)_{n}(\sin{\psi})\\
   \Longrightarrow\qquad \qquad \qquad  (\Rc h)_{n}(\sin{\psi}) &= \frac{(\Vc_{w}h)_{n}(\psi)}{c_1 e^{-\I n(\psi- \pi/2)} +c_2  e^{\I n(\psi- \pi/2)}}.
\end{align*}
Then, we obtain the required inversion formulas \eqref{weighted Recovery formula-I} and \eqref{weighted Recovery formula-II}  by a direct application of \eqref{Recovery formula-I} and \eqref{Recovery formula-II}.
\end{proof}
\begin{remark}
\begin{itemize}
\item[]
\item For the special choice of $c_1=c_2=1$, the transform $\Vc_w$ reduces to the standard V-line transform $\Vc$ and for $c_1 = -c_2 = 1$, $\Vc_w$ reduces to signed V-line transform $\Vc^{-}$. 
\item For a scalar function $h$, defined on $\Db$, its Radon transform is assumed to be zero for lines which are at a distance strictly bigger than $1$ from the origin, that is, $\Rc h(s,\phi) = 0$ for $|s| > 1$. 
\item It is unnecessary to keep the arguments $\phi$ and $\psi$ in the upcoming long expressions involving the components of vectors $\vuu (\phi,\psi) = (u_1(\phi,\psi) , u_2(\phi,\psi) )$ and $\vv(\phi,\psi)  = (v_1(\phi,\psi) , v_2(\phi,\psi) )$. Therefore, for better readability and simplifying expressions, we choose to write $\vuu$ and $\vv$ in the place of $\vuu (\phi, \psi)$ and $\vv (\phi, \psi)$, respectively, with the understanding that $\vuu$ and $\vv$ are vector fields depending on $\phi$ and $\psi$. For the same reason to reduce the length of many expressions, we define $\xi^{+}(\phi,\psi)=\phi+\left(\psi-\frac{\pi}{2}\right)$ and $\xi^{-}(\phi,\psi)=\phi-\left(\psi-\frac{\pi}{2}\right)$. 
\end{itemize}  
\end{remark}
\noindent Our aim in this article is to develop inversion algorithms to recover symmetric $m$-tensor fields from various generalized V-line transforms, which we introduce below. 
\begin{definition}\label{longi V-line}
Let $\vf \in C_c^{\infty}(S^m(\Db)).$ The \textbf{longitudinal V-line transform} of $\vf$ is defined by
\begin{align}\label{eq:longi V-line}
\Lc\vf(\phi,\psi)&=\int_{0}^{\infty} \langle \vuu^m, \vf(\Phi(\phi)+t \vuu) \rangle \,dt +\int_{0}^{\infty} \langle \vv^m, \vf(\Phi(\phi)+t \vv) \rangle \,dt \nonumber\\
&=\Rc\left(\left\langle \vuu^m,  \vf  \right\rangle\right)\left(\sin {\psi},\xi^{-}(\phi,\psi) \right)+ \Rc\left(\left\langle \vv^m, \vf \right\rangle\right)\left(\sin {\psi},\xi^{+}(\phi,\psi) \right), 
\end{align}
where $\vuu^m$ denotes the $m^{th}$ symmetric tensor product of $\vuu$ and $ \langle \vuu^m, \vf \rangle = u_{i_1}\dots u_{i_m} f_{i_1\dots i_m}$.
\end{definition}
\begin{definition}\label{trans V-line}
Let $\vf \in C_c^{\infty}(S^m(\Db)).$ The \textbf{transverse V-line transform} of $\vf$ is defined by
\begin{align}\label{ed:trans V-line}
\Tc\vf(\phi,\psi)&=\int_{0}^{\infty} \left\langle (\vuu^\perp)^m, \vf(\Phi(\phi)+t \vuu) \right\rangle \,dt +\int_{0}^{\infty} \left\langle (\vv^\perp)^m, \vf(\Phi(\phi)+t \vv) \right\rangle \,dt \nonumber\\
&=\Rc\left(\left\langle (\vuu^\perp)^m,  \vf  \right\rangle\right)\left(\sin {\psi},\xi^{-}(\phi,\psi) \right)+ \Rc \left(\left\langle (\vv^\perp)^m,  \vf \right\rangle\right)\left(\sin {\psi},\xi^{+}(\phi,\psi) \right).
\end{align}
Here $\vuu^\perp$ is defined by taking $90^{\circ}$ anticlockwise rotation of $\vuu=(u_1,u_2)$, i.e. $\vuu^\perp=(-u_2,u_1)$.
\end{definition}
\begin{definition}\label{mixed V-line}
    Let $\vf \in C_c^{\infty}(S^m(\Db))$ and $1\le \ell\le(m-1).$ The \textbf{mixed V-line transforms} of $\vf$ is defined by
\begin{align}\label{eq:mixed V-line}
\Mc_{\ell}\vf(\phi,\psi)&=  \int_{0}^{\infty}   \left\langle (\vuu^\perp)^\ell\vuu^{(m-\ell)}, \vf(\Phi(\phi)+t \vuu) \right\rangle \,dt +\int_{0}^{\infty} \left\langle (\vv^\perp)^\ell\vv^{(m-\ell)}, \vf(\Phi(\phi)+t \vv) \right\rangle \,dt\nonumber\\
&=\Rc\left(\left\langle (\vuu^\perp)^\ell\vuu^{(m-\ell)},  \vf \right\rangle\right)\left(\sin {\psi},\xi^{-}(\phi,\psi) \right) + \Rc\left(\left\langle (\vv^\perp)^\ell\vv^{(m-\ell)},  \vf \right\rangle\right)\left(\sin {\psi},\xi^{+}(\phi,\psi) \right),
\end{align}
where $\displaystyle (\vuu^\perp)^\ell\vuu^{(m-\ell)} = u^\perp_{i_1}\dots u^\perp_{i_\ell}u_{i_{\ell+1}}\dots u_{i_m}$.
\end{definition}
\begin{definition}\label{moment longi V-line}
Let $\vf \in C_c^{\infty}(S^m(\Db))$ and $k\ge 0$ be an integer. The \textbf{$k$-th moment longitudinal V-line transform} of $\vf$ is defined by
\begin{align}\label{eq:moment longi V-line}
\Lc^{k}\vf(\phi,\psi)=\int_{0}^{\infty} t^k \left\langle \vuu^m, \vf(\Phi(\phi)+t \vuu) \right\rangle \,dt +\int_{0}^{\infty} t^k \left\langle \vv^m, \vf(\Phi(\phi)+t \vv) \right\rangle \,dt. \end{align}
\end{definition}
\begin{definition}\label{moment trans V-line}
Let $\vf \in C_c^{\infty}(S^m(\Db))$ and $k\ge 0$ be an integer. The \textbf{$k$-th moment transverse V-line transform} of $\vf$ is defined by
\begin{align}\label{eq:moment trans V-line}
\Tc^{k}\vf(\phi,\psi)=\int_{0}^{\infty} t^k\left\langle (\vuu^\perp)^m, \vf(\Phi(\phi)+t \vuu) \right\rangle \,dt +\int_{0}^{\infty} t^k \left\langle (\vv^\perp)^m, \vf(\Phi(\phi)+t \vv) \right\rangle \,dt.     
\end{align}
\end{definition}
\noindent Now, we are ready to state the main results we addressed in the article. 

\begin{section}{Main results} \label{sec: Main_result}
\begin{theorem}\label{Recovery from M_lf}
Let $\vf\in C^{\infty}_c(S^m(\Db))$. Then $\vf$ can be recovered uniquely from the knowledge of $\Mc_{\ell}\vf$,   $0\leq \ell \leq m$, where $\Mc_{0}=\Lc$ and $\Mc_{m}=\Tc$. 
\end{theorem}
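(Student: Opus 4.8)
The plan is to turn each transform $\Mc_\ell\vf$ into (angular Fourier modes of) Radon transforms of a fixed family of scalar functions extracted from $\vf$, and then invert mode by mode.

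I would first fix notation for the angular components of $\vf$: for $k\in\{-m,-m+2,\dots,m\}$, let $f^{(k)}$ be the complex functions determined by $\langle\Phi(\theta)^m,\vf(\vx)\rangle=\sum_k f^{(k)}(\vx)\,e^{\I k\theta}$. Since $\theta\mapsto\langle\Phi(\theta)^m,\vf(\vx)\rangle$ is a trigonometric polynomial of degree $\le m$ carrying only frequencies of the parity of $m$, there are exactly $m+1$ such components, and the linear map $\vf\mapsto(f^{(k)})_k$ is injective; hence recovering every $f^{(k)}$ recovers $\vf$. Expanding $\langle(\vuu^\perp)^\ell\vuu^{(m-\ell)},\vf(\vx)\rangle$ in the coordinates of $\vuu=\Phi(\pi+\phi-\psi)$ and regrouping, one gets $\langle(\vuu^\perp)^\ell\vuu^{(m-\ell)},\vf(\vx)\rangle=(-1)^m\sum_k B_\ell(k)\,f^{(k)}(\vx)\,e^{\I k(\phi-\psi)}$, where $B_\ell(k)$ is, up to the $k$-independent factor $\I^{-\ell}$ and a factor $\binom{m}{(m+k)/2}^{-1}$, the coefficient of $x^{(m+k)/2}$ in $(1-x)^\ell(1+x)^{m-\ell}$; the analogous identity for $\vv=\Phi(\pi+\phi+\psi)$ has $\phi-\psi$ replaced by $\phi+\psi$. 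Feeding these into Definition~\ref{mixed V-line} and using that each $e^{\I k(\phi\mp\psi)}$ is constant in $\vx$, $\Mc_\ell\vf(\phi,\psi)$ becomes an explicit linear combination of $\Rc(f^{(k)})(\sin\psi,\xi^{-}(\phi,\psi))$ and $\Rc(f^{(k)})(\sin\psi,\xi^{+}(\phi,\psi))$.

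Next I would fix $\psi$ and take Fourier series in the vertex angle $\phi$, using \eqref{fourier component known data}; since $\xi^{\pm}(\phi,\psi)$ is an affine function of $\phi$, the $N$-th Fourier coefficient is
\[
(\Mc_\ell\vf)_N(\psi)=2(-1)^m\sum_k B_\ell(k)\cos\!\Big(\tfrac{(N-k)\pi}{2}-N\psi\Big)(\Rc f^{(k)})_{N-k}(\sin\psi).
\]
With $k=m-2p$, the cosine equals $(-1)^p\cos\big(\tfrac{(N-m)\pi}{2}-N\psi\big)$, so it factors out. For each $N$, and each $\psi$ at which this remaining cosine is nonzero, dividing by it turns the $m+1$ identities $\ell=0,\dots,m$ into a square system for the $m+1$ unknowns $(\Rc f^{(m-2p)})_{N-m+2p}(\sin\psi)$, $p=0,\dots,m$, with matrix $\big((-1)^p B_\ell(m-2p)\big)_{\ell,p}$. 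The heart of the proof is to show this matrix is invertible; after cancelling the row factor $\I^{-\ell}$, the column factors $(-1)^p\binom{m}{p}^{-1}$, and reversing the column order, invertibility is equivalent to the linear independence of the polynomials $(1-x)^\ell(1+x)^{m-\ell}$, $\ell=0,\dots,m$. That I would obtain by dividing a hypothetical relation $\sum_\ell c_\ell (1-x)^\ell(1+x)^{m-\ell}\equiv0$ by $(1-x)^m$ and substituting $t=\tfrac{1+x}{1-x}$: this yields a polynomial identity in $t$ valid on an interval, forcing $c_\ell=0$ for all $\ell$.

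Having solved each system, $(\Rc f^{(k)})_n(\sin\psi)$ is known for every $n$ and for all but finitely many $\psi$ at each mode with $N\ne 0$; since $\psi\mapsto\sin\psi$ maps $(0,\pi/2)$ onto $(0,1)$ and each $s\mapsto(\Rc f^{(k)})_n(s)$ is continuous and supported in $|s|\le1$, this determines $(\Rc f^{(k)})_n(s)$ on all of $\Rb$, after which Cormack's inversion \eqref{Recovery formula-I} (or Perry's \eqref{Recovery formula-II}) returns $f^{(k)}_n(r)$, hence each $f^{(k)}$, hence $\vf$; the same computation applied to $\Mc_\ell\vf\equiv0$ gives uniqueness. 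I expect the main obstacle to be precisely the algebraic invertibility above, together with the careful treatment of the exceptional pairs $(N,\psi)$ where the trigonometric prefactor vanishes — in particular the mode $N=0$, where it may vanish for every $\psi$ and must be handled by a separate argument — and the bookkeeping of the angular identities for the perpendicular directions $\vuu^{\perp}$ and $\vv^{\perp}$.
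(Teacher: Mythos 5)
Your strategy is a systematic generalization of the paper's second (componentwise Fourier) approach, which the paper carries out only for $m=1,2$; the paper's primary route is different, using the decomposition $\vf=\sum_{j}(\D^{\perp})^{m-j}\D^{j}\chi^{(j)}$ and reducing each $\Mc_{\ell}\vf$ to a weighted V-line transform of $\chi^{(\ell)}$. Your reduction of the per-mode square system to the linear independence of $(1-x)^{\ell}(1+x)^{m-\ell}$, $\ell=0,\dots,m$, is correct and is a clean way to treat general $m$, and the factorization of the prefactor $\cos\big(\tfrac{(N-m)\pi}{2}-N\psi\big)$ checks out; for even $m$ this prefactor equals $\pm1$ at $N=0$ and has only isolated zeros for $N\neq 0$, so your continuity argument closes the proof there (modulo double-checking the sign conventions in $B_{\ell}(k)$, to which the invertibility argument is robust).

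The genuine gap is the case you defer. For odd $m$ the prefactor at $N=0$ is $\cos(m\pi/2)=0$ for \emph{every} $\psi$, so $(\Mc_{\ell}\vf)_{0}\equiv 0$ for all $\ell$ and the unknowns $(\Rc f^{(k)})_{-k}$ are not merely hard to extract --- they are not determined by the data. No separate argument can supply them, because the statement is false for odd $m$: for $m=1$ take $\chi=\rho(|\vx|^{2})$ radial, nonconstant, in $C_{c}^{\infty}(\Db)$, and $\vf=\D^{\perp}\chi=2\rho'(|\vx|^{2})(-x_{2},x_{1})$. Along the two branches $y(t)=\Phi(\phi)+t\vuu$ and $y(t)=\Phi(\phi)+t\vv$ one has the same $|y(t)|^{2}=1-2t\cos\psi+t^{2}$, while $\langle\vuu,\vf(y(t))\rangle=2\rho'(|y(t)|^{2})\sin\psi$ and $\langle\vv,\vf(y(t))\rangle=-2\rho'(|y(t)|^{2})\sin\psi$, so the branch integrals cancel and $\Lc\vf\equiv 0$; also $\langle\vuu^{\perp},\vf(y(t))\rangle=\frac{d}{dt}\chi(y(t))$ integrates to $-\chi(\Phi(\phi))=0$, so $\Tc\vf\equiv 0$. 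More generally, by \eqref{odd and even m tensor} the fields $(\D^{\perp})^{m-\ell}\D^{\ell}\chi^{(\ell)}$ with radial $\chi^{(\ell)}$ lie in the common kernel of all $\Mc_{j}$ when $m$ is odd. Your Fourier analysis has therefore located a real defect rather than created one: the paper's own argument meets the same divisor, since for odd $m$ it produces $\Vc_{w}\chi^{(\ell)}$ with $c_{1}=-c_{2}$ and then invokes \eqref{weighted Recovery formula-I}--\eqref{weighted Recovery formula-II}, which divide by $c_{1}e^{-\I n(\psi-\pi/2)}+c_{2}e^{\I n(\psi-\pi/2)}$, identically zero at $n=0$. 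The honest conclusion of your method is that recovery holds for even $m$, and for odd $m$ only modulo the radial ($n=0$) parts of the potentials, which must be supplied as extra data or excluded by hypothesis.
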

\noindent We present two approaches to prove this theorem, which we briefly discuss below.
\vspace{2mm}

\noindent \textbf{$1^{st}$ approach to prove Theorem \ref{Recovery from M_lf}:} The first method is based on a known decomposition of a symmetric $m$-tensor field in $\Rb^2$ (derived in \cite{derevtsov3}). This decomposition is a generalization of the well-known potential (curl-free) and solenoidal (divergence-free) decomposition of a vector field in $\Rb^2$. The main idea here is to derive appropriate relations between the generalized V-line transforms ($\Mc_{\ell}\vf$) and the weighted V-line transforms of the corresponding potentials with appropriate weights. Then, using any of the inversion formulas of a weighted V-line transform of a function given in \eqref{weighted Recovery formula-I} and \eqref{weighted Recovery formula-II}, we can recover the potentials explicitly.
As a corollary of this inversion and the used approach, we also give an explicit kernel description for these generalized $V$-line transforms (discussed in the proof section).


\vspace{2mm}
\noindent \textbf{$2^{nd}$ approach to prove Theorem \ref{Recovery from M_lf}:} In the second strategy, we recover a symmetric $2$-tensor field componentwise. This approach uses the Fourier series expansion method similar to that discussed above for the Radon transform and weighted V-line transform. More specifically, we will expand the components of the unknown symmetric $2$-tensor field $\vf$, and its integral transforms in the Fourier series. Then, the goal will be to recover the Fourier coefficients of components of $\vf$ in terms of the Fourier coefficients of used integral transforms.
\begin{theorem}\label{Recovery from longi/trans moments}
Let $\vf\in C^{\infty}_c(S^m(\Db))$. Then, $\vf$ can be uniquely reconstructed from its first $(m+1)$ integral moment longitudinal/transverse V-line transforms.
\end{theorem}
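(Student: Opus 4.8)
The plan is to reduce the moment V-line transforms to moment \emph{Radon} transforms and then invoke the known reconstruction of a symmetric $m$-tensor field from its first $(m+1)$ moment Radon transforms (the approach of Sharafutdinov, and in the $2$D setting the work reconstructing tensors from moment ray transforms). Concretely, each summand in $\Lc^k\vf(\phi,\psi)$ is an integral of $\langle\vuu^m,\vf\rangle$ against $t^k$ along a ray starting on $\partial\Db$; since $\vf$ is compactly supported in the open disk, this ray integral is a (shifted) moment Radon transform. The first step is therefore to make this precise: write
\begin{align*}
\int_0^\infty t^k\langle\vuu^m,\vf(\Phi(\phi)+t\vuu)\rangle\,dt
\end{align*}
in terms of $\int_{\Rb} (t-t_0)^k \langle\vuu^m,\vf(s\Phi(\beta)^\perp+t\Phi(\beta))\rangle\,dt$ where $\beta=\xi^{-}(\phi,\psi)$, $s=\sin\psi$, and $t_0$ is the (explicitly computable) parameter value at the vertex $\Phi(\phi)$ on that line. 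Expanding $(t-t_0)^k$ binomially shows that knowing $\Lc^0\vf,\dots,\Lc^m\vf$ is equivalent to knowing, for each line through the disk meeting the relevant parameter range, the quantities $\int_{\Rb} t^j\langle\vuu^m,\vf\rangle\,dt$ for $j=0,\dots,m$ — i.e. the first $(m+1)$ moments of the ordinary ray transform of the scalar function $\langle\vuu^m,\vf\rangle$ along each branch direction.

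The second step is to expand $\langle\vuu^m,\vf(\vx)\rangle = u_{i_1}\cdots u_{i_m} f_{i_1\dots i_m}(\vx)$ and recognize that $u_{i_1}\cdots u_{i_m}$ is a trigonometric polynomial in the direction angle of $\vuu$ of degree $m$; hence the $j$-th moment of the ray transform in direction $\vuu$ is a linear combination of the $j$-th moments of the ordinary ray transforms of the individual components $f_{i_1\dots i_m}$. Collecting over the two branches $\vuu,\vv$ (whose angles are $\pi+\phi\mp\psi$) and over $j=0,\dots,m$, and using the Fourier-series-in-$\phi$ bookkeeping already set up in the paper (as in the proof of the weighted V-line theorem), one converts the data $\{\Lc^k\vf\}_{k=0}^m$ into the data $\{$ $j$-th moment Radon transform of $f_{i_1\dots i_m}$, $j=0,\dots,m\}$ for lines at distance $s=\sin\psi\in(0,1)$ from the origin. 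Since $\psi$ ranges over $(0,\pi/2)$, $s$ ranges over all of $(0,1)$, which together with compact support in $\Db$ is enough to determine all moment Radon transforms for $|s|<1$.

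The third step is to apply the known inversion of the moment ray transform on $\Rb^2$: the first $(m+1)$ moments of the longitudinal ray transform of a symmetric $m$-tensor field determine the field uniquely (and constructively, via the recovery of successively higher Fourier modes / via Sharafutdinov's scheme relating the $j$-th moment to the field modulo potentials of order $j$). Equivalently, one can run the argument componentwise using the Cormack–Perry formulas \eqref{Recovery formula-I}–\eqref{Recovery formula-II} for the zeroth moment and then bootstrap: the top moment pins down the "most potential" part, each lower moment recovers the next piece. The transverse case $\Tc^k\vf$ is identical after replacing $\vuu\mapsto\vuu^\perp$, $\vv\mapsto\vv^\perp$, which only rotates the angular weights by $\pi/2$ and changes no structural feature. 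I expect the main obstacle to be purely bookkeeping: tracking the vertex-shift terms $t_0=t_0(\phi,\psi)$ through the binomial expansion and verifying that the resulting $(m+1)\times(m+1)$ linear system (moments of $\Lc$ versus moments of $\Rc$) is invertible for every relevant line — i.e. that no information is lost in passing from the V-line moments to the ray moments. Once that triangular/invertible structure is confirmed, the reconstruction follows from the already-established scalar inversion formulas and the classical moment-ray-transform theory, so the heart of the proof is the reduction rather than any new analysis.
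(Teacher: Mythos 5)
Your route is genuinely different from the paper's: the paper decomposes $\vf=\sum_{j=0}^{m}(\D^{\perp})^{m-j}\D^{j}\chi^{(j)}$, observes that $\Lc^{k}$ annihilates every summand with $j>k$, and recovers the potentials $\chi^{(0)},\chi^{(1)},\dots,\chi^{(m)}$ one at a time, at each stage subtracting the already-known contributions and reducing to a weighted scalar V-line transform of $\chi^{(k)}$, which is then inverted by the Cormack/Perry-type formulas. You instead try to convert the V-line moment data into classical moment ray transform data and invoke the moment-ray-transform inversion for tensor fields. The first part of your reduction is sound, and in fact the step you flag as the main obstacle is not one: parametrizing the branch through $\Phi(\phi)$ along $\vuu$ as $\sin\psi\,\Phi(\xi^{-})+\tau\,\Phi(\xi^{-})^{\perp}$ gives $t=\tau+\cos\psi$ (and $t=\cos\psi-\tau$ on the $\vv$-branch), so the binomial expansion yields a \emph{unit lower triangular} system expressing $\Lc^{k}\vf$ through the quantities
\begin{equation*}
A_{j}(\phi,\psi)\;=\;I^{j}\bigl(\langle\vuu^{m},\vf\rangle\bigr)\bigl(\sin\psi,\xi^{-}(\phi,\psi)\bigr)\;+\;(-1)^{j}\,I^{j}\bigl(\langle\vv^{m},\vf\rangle\bigr)\bigl(\sin\psi,\xi^{+}(\phi,\psi)\bigr),\qquad 0\le j\le k,
\end{equation*}
where $I^{j}$ denotes the $j$-th moment along the line; that system is trivially invertible.

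The genuine gap is the next sentence, where you claim this is ``equivalent to knowing \dots\ $\int_{\Rb}t^{j}\langle\vuu^{m},\vf\rangle\,dt$'' for each line separately. What you actually possess is only the two-branch combination $A_{j}$, which is a \emph{sum} of the two single-line $j$-th moments for $j$ even and a \emph{difference} for $j$ odd; the two branches live on different lines and are never observed individually. Separating them requires exactly the Fourier-division step used for $\Vc_{w}$, and there the angular multiplier is $e^{-\I n(\psi-\pi/2)}+(-1)^{j}e^{\I n(\psi-\pi/2)}$, which for odd $j$ vanishes identically when $n=0$ (and at isolated $\psi$ otherwise). So the zeroth angular Fourier mode of every odd-order moment ray transform is simply not determined by your intermediate data, and the subsequent appeal to the classical ``first $(m+1)$ moments determine the field'' theorem cannot be made: you do not have its hypotheses. (A secondary overstatement: even with branch separation you would obtain the scalar functions $I^{j}(\langle\vuu^{m},\vf\rangle)$ on the space of lines, one number per line per $j$, not the $j$-th moment Radon transforms of the $m+1$ individual components $f_{i_1\dots i_m}$ — at fixed $j$ that is a counting impossibility.) The paper's potential-decomposition route sidesteps branch separation entirely because each stage isolates a single scalar potential whose two-branch combination is inverted directly as a weighted V-line transform; to rescue your approach you would need to show either that the lost Fourier modes are redundant within the full moment family, or to re-organize the argument around the potentials, at which point you have reproduced the paper's proof.
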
  
\noindent To address this question, we again use the decomposition used in the $1^{st}$ approach and derive relations between the defined first ($m+1$) moment longitudinal/transverse V-line transforms and weighted V-line transforms of the corresponding potentials with appropriate weights. Then, we use the known inversion formulas of a weighted V-line transform to conclude the argument.

\end{section}


\section{Proof of Theorem \ref{Recovery from M_lf} (Approach 1)}\label{sec: decomp recovery}
We break this section into two subsections addressing the cases of vector fields and symmetric $m$-tensor fields separately. The vector field can be treated as a practice case, which gives an idea of what to expect for the case of symmetric tensor fields 
of arbitrary order $m$. In addition to the inversion algorithms, we also give a characterization of the kernel of longitudinal, transverse, and mixed V-line transforms. 
\subsection{Vector fields ($m=1$)}
We start by recalling a decomposition result for vector fields presented in \cite{derevtsov3}. 
\begin{theorem}\label{vector decomp}(\cite{derevtsov3})
For any $\vf \in C^{\infty}_c(S^1(\Db))$, there exist unique smooth functions (known as potentials) $\chi$ and $\eta$ such that 
\begin{align}
    \vf= \D^{\perp}\chi+\D\eta, \quad \chi|_{\partial \Db}=0, \quad \eta|_{\partial \Db}=0. 
\end{align}
The following proposition is a generalization of the \cite[Proposition 3.1]{derevtsov3} in the V-line setting. The proof follows along exactly similar lines, and therefore, we did not present it here. 
\end{theorem}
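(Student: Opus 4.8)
The plan is to recognize the stated splitting as the two-dimensional Helmholtz (Hodge-type) decomposition of $\vf$ into a solenoidal part $\D^\perp\chi$ and a potential part $\D\eta$, and to construct the two potentials by solving scalar Dirichlet problems for the Laplacian on $\Db$. The key algebraic input is that the first-order operators are compatible: $\delta\D^\perp=0$ and $\delta^\perp\D=0$ on scalar functions. Hence, if $\vf=\D^\perp\chi+\D\eta$, then applying $\delta$ forces $\Delta\eta=\delta\vf$ and applying $\delta^\perp$ forces $\Delta\chi=\delta^\perp\vf$. This pins down the potentials: $\eta$ must be the solution of $\Delta\eta=\delta\vf$ in $\Db$ with $\eta|_{\partial\Db}=0$, and $\chi$ the solution of $\Delta\chi=\delta^\perp\vf$ in $\Db$ with $\chi|_{\partial\Db}=0$.

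For the construction, I would use that $\vf\in C_c^\infty(S^1(\Db))$ makes $\delta\vf$ and $\delta^\perp\vf$ smooth and compactly supported in $\Db$, so the Dirichlet problem for the Laplacian on the disk is uniquely solvable (via the Green's function of $\Db$, or Lax--Milgram in $H_0^1(\Db)$ together with interior and boundary elliptic regularity), producing unique $\eta,\chi\in C^\infty(\overline{\Db})$ with the stated boundary values. Uniqueness of the pair $(\chi,\eta)$ is then automatic, since any admissible pair solves exactly these two problems; equivalently, if $\D^\perp\chi_1+\D\eta_1=\D^\perp\chi_2+\D\eta_2$ with all four potentials vanishing on $\partial\Db$, then applying $\delta$ and $\delta^\perp$ shows $\eta_1-\eta_2$ and $\chi_1-\chi_2$ are harmonic with zero boundary data, hence zero.

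The remaining and, in my view, most delicate step is to check that these particular $\chi,\eta$ actually reproduce $\vf$, i.e. that $\vg:=\vf-\D^\perp\chi-\D\eta$ vanishes identically. By construction $\delta\vg=0$ and $\delta^\perp\vg=0$ in $\Db$; since $\Db$ is simply connected, $\delta^\perp\vg=0$ yields $\vg=\D\zeta$ for some $\zeta\in C^\infty(\Db)$, and then $\delta\vg=\Delta\zeta=0$, so $\zeta$ is harmonic. One must then force $\zeta$ to be constant, and here the hypotheses on $\vf$ have to be converted into boundary information for $\zeta$: the vanishing of $\vf$ near $\partial\Db$ together with $\chi|_{\partial\Db}=\eta|_{\partial\Db}=0$ must be parlayed into enough control of the tangential and normal traces of $\zeta$ on $\partial\Db$ to conclude. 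I expect this boundary bookkeeping --- reconciling what looks a priori like two Dirichlet conditions for a first-order system that is naturally ``determined'' by only one --- to be the main obstacle, and the point at which the compact support of $\vf$ (rather than mere smoothness on $\Db$) is genuinely used. The same three-step scheme, with $\D,\D^\perp,\delta,\delta^\perp$ and the scalar Laplace--Dirichlet problems replaced by their tensorial analogues, should then yield the corresponding decomposition for symmetric $m$-tensor fields.
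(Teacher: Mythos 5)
First, a point of reference: the paper offers no proof of this statement at all --- it is imported verbatim from \cite{derevtsov3} --- so there is no in-house argument to measure your proposal against. Your overall scheme (characterise the potentials as the solutions of the Dirichlet problems $\Delta\eta=\delta\vf$ and $\Delta\chi=\delta^{\perp}\vf$ with zero boundary data, deduce uniqueness from harmonicity of the differences, and then try to show that the residual $\vg=\vf-\D^{\perp}\chi-\D\eta$ vanishes) is the natural one, and the uniqueness half of your argument is complete and correct.

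The gap you flag in the existence half is, however, not a technical nuisance that sharper boundary bookkeeping will remove: the harmonic residual $\D\zeta$ is genuinely nonzero for most $\vf\in C_c^{\infty}(S^1(\Db))$, so that step cannot be closed for the statement as written. Indeed, suppose $\vf=\D^{\perp}\chi+\D\eta$ with $\chi,\eta\in C^{1}(\overline{\Db})$ and $\chi|_{\partial\Db}=\eta|_{\partial\Db}=0$. For any harmonic $h$ on $\Db$, one integration by parts gives $\int_{\Db}\langle \D\eta,\D h\rangle\,dx=-\int_{\Db}\eta\,\Delta h\,dx+\int_{\partial\Db}\eta\,\partial_{\nu}h\,d\sigma=0$ and $\int_{\Db}\langle \D^{\perp}\chi,\D h\rangle\,dx=\int_{\Db}\chi\,(\partial_{2}\partial_{1}h-\partial_{1}\partial_{2}h)\,dx+\int_{\partial\Db}\chi\,(\nu_{1}\partial_{2}h-\nu_{2}\partial_{1}h)\,d\sigma=0$, hence $\int_{\Db}\langle\vf,\D h\rangle\,dx=0$. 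Taking $h=x_{1}$ and $h=x_{2}$ already forces $\int_{\Db}\vf\,dx=0$, a condition a general compactly supported field does not satisfy (e.g.\ $\vf=(\rho,0)$ with $\rho\ge 0$ a bump of unit mass); taking all harmonic polynomials produces infinitely many independent necessary conditions. Equivalently, the sum $\{\D\eta:\eta|_{\partial\Db}=0\}+\{\D^{\perp}\chi:\chi|_{\partial\Db}=0\}$ is $L^{2}$-orthogonal to the infinite-dimensional space of gradients of harmonic functions, which is exactly where your residual $\D\zeta$ lives. So either the quoted theorem carries implicit moment conditions on $\vf$ that have been dropped in transcription, or one of the two Dirichlet conditions must be relaxed (giving the classical potential--solenoidal decomposition, in which only $\eta|_{\partial\Db}=0$ is imposed and $\chi$ is fixed up to an additive constant). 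Your construction correctly identifies the unique candidate pair $(\chi,\eta)$ and isolates the obstruction; for a field with $\int_{\Db}\vf\,dx\neq 0$ no argument can make $\zeta$ constant. The same obstruction, compounded, affects the tensorial generalisation you sketch in your last sentence.
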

\begin{prop}\label{vector proposition}
Let $\chi,\eta \in C^\infty(\Db)$ that vanish on the boundary $\partial \Db$, then the potential vector field $\D\eta$ and solenoidal vector field  $\D^{\perp}\chi$ satisfy the following properties:
\begin{enumerate}
\item $\Lc(\D\eta)(\phi,\psi) = 0$, \quad $\Tc (\D^{\perp}\chi)(\phi,\psi) = 0$.
\item The longitudinal V-line transform of $\D^{\perp}\chi$ is connected with the Radon transform of $\chi$ by the following relation
\begin{align}\label{vector L_dperp_chi}
\Lc (\D^{\perp}\chi)(\phi,\psi) = \frac{\partial}{\partial s}\left[\Rc\chi\left(s,\xi^{-}(\phi,\psi)\right)-\Rc\chi\left(s,\xi^{+}(\phi,\psi)\right)\right], \quad s=\sin{\psi}.
\end{align}
\item The transverse V-line transform of $\D\eta$ is connected with the Radon transform of $\eta$ by the following relation
\begin{align}\label{vector T_d_eta}
\Tc (\D\eta)(\phi,\psi) = -\frac{\partial}{\partial s}\left[\Rc\eta\left(s,\xi^{-}(\phi,\psi)\right) -\Rc\eta\left(s,\xi^{+}(\phi,\psi)\right)\right], \quad s=\sin{\psi}.
\end{align}
\end{enumerate}
\end{prop}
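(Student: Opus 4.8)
\textbf{Proof plan for Proposition \ref{vector proposition}.}

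The plan is to reduce everything to the relation \eqref{eq:relation between Radon and W V line} between the weighted V-line transform and the Radon transform, together with the derivative property $\Rc(\partial h/\partial x_k)(s,\Phi(\phi)) = (\Phi(\phi))_k\,\partial_s \Rc h(s,\Phi(\phi))$ stated earlier. Concretely, for a scalar potential $V$ vanishing on $\partial\Db$, each component of $\D V$ or $\D^\perp V$ is a partial derivative of $V$, so $\langle \vuu, \D V\rangle = u_1\partial_1 V + u_2\partial_2 V$ and similarly for the other three combinations $\langle \vuu^\perp,\D V\rangle$, $\langle \vuu,\D^\perp V\rangle$, $\langle \vuu^\perp,\D^\perp V\rangle$. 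The first step is to compute these four inner products explicitly in terms of $\vuu=(u_1,u_2)$ and the direction orthogonal to the V-line branch. The key geometric observation (already recorded in the caption of Figure \ref{fig1(a)}) is that the ray $\{\Phi(\phi)+t\vuu\}$ lies at distance $s=\sin\psi$ from the origin and is orthogonal to $\Phi(\xi^{-}(\phi,\psi))$, i.e. $\vuu = \pm\Phi(\xi^-)^\perp$; likewise the $\vv$-branch is orthogonal to $\Phi(\xi^+(\phi,\psi))$. One should fix the signs carefully here (this is where a short computation with $\vuu = \Phi(\pi+\phi-\psi)$ is needed), since the signs are what produce the minus sign between the two Radon terms in \eqref{vector L_dperp_chi}–\eqref{vector T_d_eta}.

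Next I would handle the two vanishing statements in item (1). For $\Lc(\D\eta)$: writing $\langle\vuu,\D\eta\rangle = u_k\partial_k\eta$ and applying the derivative property, the Radon transform of this along the line through $\sin\psi$ in direction $\xi^-$ picks up the factor $(\Phi(\xi^-))_k u_k = \langle\vuu,\Phi(\xi^-)\rangle$; but $\vuu\perp\Phi(\xi^-)$, so this inner product is $0$. The same argument kills the $\vv$-branch, hence $\Lc(\D\eta)=0$. For $\Tc(\D^\perp\chi)$: now $\langle\vuu^\perp,\D^\perp\chi\rangle$ unwinds so that the Radon transform picks up $\langle\vuu^\perp,\Phi(\xi^-)^\perp\rangle = \langle\vuu,\Phi(\xi^-)\rangle = 0$ again (rotation by $90^\circ$ preserves inner products). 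So both terms vanish.

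For items (2) and (3) I would again use $\vuu = \varepsilon_u\,\Phi(\xi^-)^\perp$ with $\varepsilon_u=\pm1$ the sign fixed in step one, and $\vv = \varepsilon_v\,\Phi(\xi^+)^\perp$. Then $\langle\vuu^\perp,\D^\perp\chi\rangle$, after using $\Rc(\partial_k\chi)(s,\Phi(\theta)) = (\Phi(\theta))_k\partial_s\Rc\chi(s,\Phi(\theta))$, becomes $\langle\vuu^\perp,\Phi(\xi^-)^\perp\rangle\,\partial_s\Rc\chi(s,\xi^-) = \varepsilon_u\,\partial_s\Rc\chi(s,\xi^-)$, and similarly the $\vv$-branch gives $\varepsilon_v\,\partial_s\Rc\chi(s,\xi^+)$; one checks $\varepsilon_u = -\varepsilon_v$ under the chosen parametrization (because $\psi\mapsto-\psi$ swaps the branches), which yields exactly \eqref{vector L_dperp_chi} after substituting $s=\sin\psi$. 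The computation for $\Tc(\D\eta)$ in item (3) is the mirror image: $\langle\vuu^\perp,\D\eta\rangle$ produces $\langle\vuu^\perp,\Phi(\xi^-)\rangle\,\partial_s\Rc\eta(s,\xi^-)$, and since $\vuu^\perp\perp\Phi(\xi^-)^\perp$ is equivalent to $\vuu^\perp\parallel\Phi(\xi^-)$ up to sign, this gives $\mp\partial_s\Rc\eta(s,\xi^-)$, with the overall sign being the opposite of the one in item (2), which accounts for the leading minus in \eqref{vector T_d_eta}.

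The only real obstacle is bookkeeping: getting all the $\pm$ signs and the $\Phi(\cdot)$ versus $\Phi(\cdot)^\perp$ identifications consistent, and confirming that the $\vuu$-branch attaches to $\xi^-$ and the $\vv$-branch to $\xi^+$ (not the reverse). This is a finite, mechanical check with the explicit formulas $\vuu(\phi,\psi)=\Phi(\pi+\phi-\psi)$, $\vv(\phi,\psi)=\Phi(\pi+\phi+\psi)$, $\xi^\pm(\phi,\psi)=\phi\pm(\psi-\pi/2)$, and the rotation identity $\Phi(\theta)^\perp = \Phi(\theta+\pi/2)$; once the signs are pinned down the four relations fall out immediately. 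As the authors note, this is the exact analogue of \cite[Proposition 3.1]{derevtsov3} in the V-line setting, so no new idea beyond the Radon-derivative identity is required.
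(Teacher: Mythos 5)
Your plan is correct and is precisely the argument the paper intends: the authors omit the proof of this proposition, deferring to \cite[Proposition 3.1]{derevtsov3}, and the V-line version is exactly your reduction via $\vuu=\Phi(\xi^{-}(\phi,\psi))^{\perp}$, $\vv=-\Phi(\xi^{+}(\phi,\psi))^{\perp}$ together with the Radon-derivative identity, the signs $\varepsilon_u=+1=-\varepsilon_v$ producing the relative minus signs in \eqref{vector L_dperp_chi}--\eqref{vector T_d_eta}. The only slip is notational: in item (2) the longitudinal transform involves $\langle\vuu,\D^{\perp}\chi\rangle$, whose Radon coefficient is $\langle\vuu,\Phi(\xi^{-})^{\perp}\rangle=\varepsilon_u$, whereas the quantity you wrote, $\langle\vuu^{\perp},\Phi(\xi^{-})^{\perp}\rangle$, equals $\langle\vuu,\Phi(\xi^{-})\rangle=0$ and belongs to the vanishing statement of item (1).
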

\begin{theorem}[Kernel Description]
Let $\vf \in C^{\infty}_c(S^1(\Db))$. Then
\begin{enumerate}[label=(\alph*)]
\item  $\Lc\vf=0$ if and only if $\vf=\D\eta$, for some smooth function $\eta$ satisfying $\eta|_{\partial \Db}=0$.
\item   $\Tc\vf=0$ if and only if $\vf=\D^\perp \chi$, for some smooth function $\chi$ satisfying $\chi|_{\partial \Db}=0$.
\end{enumerate}
\end{theorem}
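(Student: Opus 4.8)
The plan is to prove each equivalence by combining the decomposition result (Theorem \ref{vector decomp}), the vanishing/connection relations (Proposition \ref{vector proposition}), and the injectivity of the weighted V-line transform (equivalently, the Radon transform) on scalar potentials vanishing on $\partial\Db$. I will prove (a) in detail; part (b) follows by the identical argument with the roles of $\D\eta$ and $\D^\perp\chi$ interchanged and $\Tc$ replacing $\Lc$.

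For the "if" direction of (a): if $\vf=\D\eta$ with $\eta|_{\partial\Db}=0$, then $\Lc\vf=\Lc(\D\eta)=0$ is exactly item (1) of Proposition \ref{vector proposition}. For the "only if" direction, suppose $\vf\in C^\infty_c(S^1(\Db))$ with $\Lc\vf=0$. Apply Theorem \ref{vector decomp} to write $\vf=\D^\perp\chi+\D\eta$ with $\chi|_{\partial\Db}=\eta|_{\partial\Db}=0$. By linearity of $\Lc$ and item (1), $\Lc(\D\eta)=0$, so
\begin{align*}
0=\Lc\vf=\Lc(\D^\perp\chi)+\Lc(\D\eta)=\Lc(\D^\perp\chi).
\end{align*}
Now invoke the connection formula \eqref{vector L_dperp_chi}: for all $(\phi,\psi)\in[0,2\pi)\times(0,\pi/2)$, with $s=\sin\psi$,
\begin{align*}
\frac{\partial}{\partial s}\left[\Rc\chi\left(s,\xi^{-}(\phi,\psi)\right)-\Rc\chi\left(s,\xi^{+}(\phi,\psi)\right)\right]=0.
\end{align*}
The point is that this is the negative of a signed weighted V-line transform $\Vc^-$ of $\chi$ (up to the $s$-derivative), so the task reduces to showing $\chi\equiv 0$.

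Concretely, the vanishing of $\Lc(\D^\perp\chi)$ says, after unwinding the definitions, that the signed V-line transform of $\chi$ (with weights $c_1=1$, $c_2=-1$) differentiated in $s$ is identically zero. Integrating in $s$ from $s=1$ downward and using that $\chi$ is supported in $\Db$ (so $\Rc\chi(s,\cdot)=0$ for $|s|\ge 1$), we get that $\Vc^-\chi\equiv 0$. Then the inversion formula \eqref{weighted Recovery formula-I} (or \eqref{weighted Recovery formula-II}) for the weighted V-line transform with $c_1=-c_2=1$ recovers every Fourier coefficient $\chi_n(r)$ from $(\Vc^-\chi)_n$, giving $\chi_n\equiv 0$ for all $n$, hence $\chi\equiv 0$. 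A small technical caveat: one must check that the denominator $c_1 e^{-\I n(\sin^{-1}s-\pi/2)}+c_2 e^{\I n(\sin^{-1}s-\pi/2)}$ does not vanish identically in $s$ for $c_1=-c_2=1$; this denominator equals $-2\I\sin\!\big(n(\sin^{-1}s-\pi/2)\big)$, which vanishes identically only when $n=0$, and the $n=0$ component of $\D^\perp\chi$ needs to be handled by a direct argument (for $n=0$ the relevant transform of $\chi$ is trivially zero by symmetry, but the $n=0$ Fourier mode of $\chi$ is then pinned down by a separate consideration, e.g. using that $\D^\perp\chi_0$ has vanishing longitudinal transform forces $\chi_0$ constant, hence zero by the boundary condition). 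Once $\chi\equiv 0$ we conclude $\vf=\D\eta$ with $\eta|_{\partial\Db}=0$, as desired.

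The main obstacle I anticipate is precisely this handling of the exceptional Fourier modes where the weight in the inversion formula degenerates — the $n=0$ mode for the signed transform in part (a), and analogously the $n=0$ mode (where the relevant weight for $\Tc(\D\eta)$ degenerates) in part (b). Away from those modes the argument is a clean chain: decompose, kill the "good" potential via Proposition \ref{vector proposition}(1), rewrite the transform of the remaining potential as a signed weighted V-line transform via \eqref{vector L_dperp_chi}/\eqref{vector T_d_eta}, integrate out the $\partial_s$, and invert. For the degenerate modes I would argue directly: the corresponding Fourier component of the potential is forced to be a harmonic-type function (constant in $r$ or a low-degree polynomial) by the structure of $\D^\perp$ resp. $\D$ in polar coordinates, and the boundary condition $\chi|_{\partial\Db}=0$ (resp. $\eta|_{\partial\Db}=0$) then forces it to vanish. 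This closes both equivalences.
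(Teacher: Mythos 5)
Your overall route is the same as the paper's: decompose $\vf=\D^\perp\chi+\D\eta$, use Proposition \ref{vector proposition}(1) to annihilate the gradient part, rewrite $\Lc(\D^\perp\chi)$ via \eqref{vector L_dperp_chi} as $\partial_s$ of a signed weighted V-line transform of $\chi$, and invert. You also correctly notice something the paper's one-line proof passes over in silence: for $c_1=-c_2=\pm1$ the denominator $c_1e^{-\I n(\psi-\pi/2)}+c_2e^{\I n(\psi-\pi/2)}=\pm 2\I\sin\bigl(n(\psi-\pi/2)\bigr)$ in \eqref{weighted Recovery formula-I}--\eqref{weighted Recovery formula-II} vanishes identically when $n=0$, so the inversion does not determine the radial Fourier mode $\chi_0$.

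However, your proposed repair of the $n=0$ mode does not work, and this is a genuine gap. You assert that ``$\D^\perp\chi_0$ has vanishing longitudinal transform forces $\chi_0$ constant.'' That implication is false: if $\chi$ is radial, then $\Rc\chi(s,\theta)$ is independent of $\theta$, so the right-hand side of \eqref{vector L_dperp_chi}, namely $\partial_s\left[\Rc\chi\left(s,\xi^{-}(\phi,\psi)\right)-\Rc\chi\left(s,\xi^{+}(\phi,\psi)\right)\right]$, vanishes identically for \emph{every} radial $\chi$, constant or not. (Geometrically: the two branches of each V-line are mirror images in the line through the origin and the vertex, and the rotational field $\D^\perp\chi=\chi'(r)r^{-1}(-x_2,x_1)$ is odd under that reflection, so the two branch integrals cancel.) For a non-constant radial $\chi\in C^\infty_c(\Db)$ one has $\delta^\perp(\D^\perp\chi)=\Delta\chi\not\equiv 0$, so $\D^\perp\chi$ is not a gradient; yet $\Lc(\D^\perp\chi)=0$. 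Hence the degenerate mode cannot be ``pinned down by a separate consideration'' as you claim, and the ``only if'' direction of (a) (and symmetrically of (b), where radial $\eta$ plays the same role for $\Tc$) is not established by your argument. Be aware that the paper's own justification --- deferring to the inversion formulas derived below --- runs into exactly the same division by zero at $n=0$, so the difficulty is not an artifact of your presentation; any complete proof must say something substantive about the radial mode, or the statement must be amended to account for it.
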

\begin{proof}The ``if'' part of the theorem follows from Proposition \ref{vector proposition}, and the ``only if'' part is an implication of inversion formulas derived below. 
\end{proof}
\noindent Now, we prove the Theorem \ref{Recovery from M_lf} for $m=1$, that is, we show how to recover a vector field from the knowledge of its longitudinal and transverse V-line transforms ($\Lc\vf,\Tc\vf$).
\begin{proof}[\textbf{Proof of Theorem \ref{Recovery from M_lf} $(m=1)$}]
We know that the unknown vector field $\vf$ can be decomposed as follows: 
$$\vf= \D^{\perp}\chi+\D\eta, \quad \chi|_{\partial \Db}=0, \quad \eta|_{\partial \Db}=0.$$
The idea is to use $\Lc \vf$ to recover $\chi$ and use $\Tc \vf $ to recover $\eta$. Keeping this in mind, let us apply $\Lc$ on the above relation to get 
$$ \Lc \vf (\phi, \psi) = \Lc (\D^{\perp}\chi)(\phi,\psi), \quad \mbox{ since } \ \ \Lc (\D\eta) = 0.$$
The right-hand side of the above equation can be computed in terms of the Radon transform of $\chi$ from equation \eqref{vector L_dperp_chi}. Therefore, we integrate equation  \eqref{vector L_dperp_chi} from $s$ to $\infty$ to obtain 
\begin{align*}
\int_{s}^{\infty}\Lc(\D^{\perp}\chi)(\phi,\sin^{-1}{t}) \,d t & = \int_{s}^{\infty}\frac{\partial}{\partial t}\left[\Rc\chi\left(t,\phi-\sin^{-1}{t}+\frac{\pi}{2}\right) -\Rc\chi\left(t,\phi+\sin^{-1}{t}-\frac{\pi}{2}\right)\right] \,d t\\
   & =- \Rc\chi\left(s,\xi^{-}(\phi,\psi)\right) +\Rc\chi\left(s,\xi^{+}(\phi,\psi)\right)\\
   &= \Vc_w \chi (\phi, \psi), \quad \mbox{ with } c_1 = -c_2 =  -1 \mbox{ (please see equation \eqref{eq:relation between Radon and W V line})}.
\end{align*}
Similarly, by integrating equation \eqref{vector T_d_eta} from $s$ to $\infty$, we obtain
\begin{align*}
\int_{s}^{\infty}\Tc(\D\eta)(\phi,\sin^{-1}{t}) dt &= \Rc\eta\left(s,\xi^{-}(\phi,\psi)\right)-\Rc\eta\left(s,\xi^{+}(\phi,\psi)\right) \\ 
&= \Vc_w \eta (\phi, \psi), \quad \mbox{ with } c_1 = -c_2 =  1.
\end{align*}
From the two relations derived above, we get the weighted V-line transforms (with different weights) of  $\chi$ and $\eta$, respectively. Therefore, we can recover $\chi$ and $\eta$ explicitly by using any of the two inversion formulas of $\Vc_w$ given \eqref{weighted Recovery formula-I} and \eqref{weighted Recovery formula-II}, with appropriate choices of constants $c_1$ and $c_2$.
\end{proof} 
\subsection{Symmetric \texorpdfstring{$m$}{m}-tensor fields}
Again, we start our discussion with a known decomposition result of a symmetric $m$-tensor field discussed in \cite[Theorem 5.1]{derevtsov3}.
\begin{theorem}\label{m-tensor decomp}\cite[Theorem 5.1]{derevtsov3}: For any symmetric $m$-tensor field $\vf\in C^{\infty}_c(S^m(\Db))$, there exist unique smooth functions $\chi^{(j)}, \  0\leq j \leq m$ satisfying:
\begin{align}
   & \vf=  \sum_{j=0}^{m}(\D^{\perp})^{m-j}\D^{j}\chi^{(j)},\nonumber\\
   & \chi^{(j)}|_{\partial \Db}=0,\dots,\D^{(m-1)}\chi^{(j)}|_{\partial \Db}=0, \quad \forall \ 0\leq j \leq m.
\end{align}
\end{theorem}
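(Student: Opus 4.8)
The statement to prove is Theorem \ref{m-tensor decomp} (the decomposition of a symmetric $m$-tensor field), which is quoted from \cite[Theorem 5.1]{derevtsov3}. Since the paper cites it rather than proving it, what is really being asked of me is to reconstruct how such a decomposition is established. The plan is to argue by induction on the tensor order $m$, peeling off one ``$\D^{\perp}$'' factor at a time, with the $m=1$ case being exactly the Helmholtz-type splitting $\vf = \D^{\perp}\chi + \D\eta$ from Theorem \ref{vector decomp}.

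First I would set up the base case. For a vector field (symmetric $1$-tensor) the result is classical: writing $\vf = \D^{\perp}\chi + \D\eta$ amounts to solving two Dirichlet problems, $\Delta\chi = \delta^{\perp}\vf$ and $\Delta\eta = \delta\vf$ on $\Db$ with zero boundary data, which have unique smooth solutions; one checks that the difference of $\vf$ and $\D^{\perp}\chi + \D\eta$ is simultaneously curl-free and divergence-free and vanishes appropriately, hence is zero. The boundary conditions $\chi|_{\partial\Db} = \eta|_{\partial\Db} = 0$ are built into the Dirichlet solve.

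For the inductive step, suppose the decomposition holds for symmetric tensor fields of order $m-1$. Given $\vf \in C^{\infty}_c(S^m(\Db))$, the key is to find a scalar potential $\chi^{(0)}$ and a symmetric $(m-1)$-tensor field $\vg$ such that $\vf = \D^{\perp}\vg + (\text{something purely }\D\text{-type})$, or more precisely to split off the ``top'' $\D^{\perp}$ layer. Concretely, I would look for $\vf = \D^{\perp}\vh + \D\vk$ with $\vh, \vk$ symmetric $(m-1)$-tensor fields obtained by solving an elliptic system (again Dirichlet problems for a Laplace-type operator acting componentwise, using that $\delta\D$ and $\delta^{\perp}\D^{\perp}$ are, up to lower-order terms, multiples of the Laplacian on the trace-free part); the requisite boundary conditions on $\vh, \vk$ and their derivatives up to order $m-1$ come from choosing the right boundary data in the elliptic solve. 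Then one would need to reconcile the two pieces: apply the inductive hypothesis to $\vh$ to write $\vh = \sum_{j=0}^{m-1}(\D^{\perp})^{m-1-j}\D^{j}\chi^{(j)}$, so that $\D^{\perp}\vh$ contributes the terms $(\D^{\perp})^{m-j}\D^{j}\chi^{(j)}$ for $0 \le j \le m-1$, and absorb $\D\vk$ into the $j=m$ term $\D^{m}\chi^{(m)}$ — this last absorption requires showing $\D\vk$ (plus whatever $\D$-type remainder appeared) is itself a pure $m$-fold gradient $\D^{m}\chi^{(m)}$, which is where commutator identities between $\D$ and $\D^{\perp}$ (e.g. $\D^{\perp}\D = \D\D^{\perp}$ on scalars, and their higher-order analogues on tensors, which commute up to sign) get used. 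Uniqueness would follow by a dimension count / linear-independence argument: if $\sum_{j}(\D^{\perp})^{m-j}\D^{j}\chi^{(j)} = 0$ with all boundary conditions, apply a suitable sequence of divergences $\delta, \delta^{\perp}$ to isolate each $\chi^{(j)}$ and conclude it solves a homogeneous Dirichlet problem, hence vanishes.

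The main obstacle I anticipate is the bookkeeping in the inductive step: making the elliptic system that produces $\vh$ and $\vk$ genuinely solvable with the precise nested boundary conditions $\chi^{(j)}|_{\partial\Db} = \dots = \D^{m-1}\chi^{(j)}|_{\partial\Db} = 0$, and correctly tracking how the non-commutativity of $\D$ and $\D^{\perp}$ on tensors (as opposed to scalars) interacts with the trace/symmetrization operations. In two dimensions this is manageable because the space of symmetric $m$-tensors is only $(m+1)$-dimensional pointwise, so the operators $(\D^{\perp})^{m-j}\D^{j}$ applied to scalars span exactly the right number of ``modes'', but verifying this requires a careful Fourier-symbol computation — effectively checking that the $m+1$ operators are linearly independent as maps from scalars to $S^m(\Db)$, which is the crux and the reason the cited reference devotes a full section to it. I would not grind through that symbol computation here, but would point to it as the technical heart and cite \cite{derevtsov3} for the details.
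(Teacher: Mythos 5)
The paper does not actually prove this statement; it is imported verbatim from \cite[Theorem 5.1]{derevtsov3}, so there is no in-paper argument to compare yours against, and your sketch must be judged on its own. It has the right ingredients (induction on $m$, Dirichlet solves for the potentials, commutativity of $\D$ and $\D^{\perp}$ in two dimensions, a linear-independence/symbol check for uniqueness), but one step of the inductive passage would fail as written. After splitting $\vf = \D^{\perp}\vg + \D\vw$ with $\vg,\vw$ symmetric $(m-1)$-tensor fields, you apply the inductive hypothesis only to $\vg$ and propose to ``absorb $\D\vw$ into the $j=m$ term $\D^{m}\chi^{(m)}$.'' That absorption is impossible in general: $\D\vw$ is a pure $m$-fold gradient only when $\vw$ is itself a pure $(m-1)$-fold gradient, which nothing in your construction guarantees, and the commutator identities you invoke do not rescue it. The repair is to apply the inductive hypothesis to $\vw$ as well, writing $\vw=\sum_{j=0}^{m-1}(\D^{\perp})^{m-1-j}\D^{j}\beta^{(j)}$ and using $\D\D^{\perp}=\D^{\perp}\D$ to get $\D\vw=\sum_{j=1}^{m}(\D^{\perp})^{m-j}\D^{j}\beta^{(j-1)}$; the potential part then contributes to \emph{all} indices $1\le j\le m$, not just $j=m$, and $\chi^{(j)}$ is the sum of the two contributions for $1\le j\le m-1$.

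Two further points you flag but do not resolve are where the real work lies. First, the boundary conditions: the theorem requires vanishing of $\chi^{(j)}$ together with its symmetrized derivatives up to order $m-1$, while the inductive hypothesis applied to the $(m-1)$-tensor potentials only delivers vanishing up to order $m-2$; upgrading this is tied to the choice of boundary data in the elliptic solves and is exactly where uniqueness of the $\chi^{(j)}$ is decided, since the intermediate splitting $\vf=\D^{\perp}\vg+\D\vw$ is itself far from unique ($2m$ scalar unknowns against $m+1$ pointwise equations). Second, your uniqueness argument by ``applying suitable divergences to isolate each $\chi^{(j)}$'' needs the explicit identities relating $\delta$, $\delta^{\perp}$ and the powers $(\D^{\perp})^{m-j}\D^{j}$ to the Laplacian; that computation is the technical content of the cited reference and cannot simply be pointed at if the goal is a self-contained proof. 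As a reconstruction the outline is reasonable, but the absorption step must be corrected before it is an argument.
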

\noindent With the help of the above decomposition, we have the following proposition that discusses the action of $\Mc_\ell$ on different parts of $\vf$. This proposition is a direct generalization of the \cite[Proposition 5.1]{derevtsov3} in our V-line setting. 
\begin{prop}\label{m-tensor Proposition}
Let $\chi^{(j)}$, $0\leq j \leq m$ be smooth functions in $\Db$ and their derivatives up to order $(m-1)$ vanish on the boundary $\partial\Db$, then the following properties hold:
\begin{enumerate}
\item $\Mc_{\ell}\left((\D^{\perp})^{m-j}\D^{j}\chi^{(j)}\right)(\phi,\psi) = 0,$ $j \neq \ell, \quad \forall \ 0\leq \ell\leq m$, where $\Mc_0 = \Lc$ and $\Mc_m = \Tc$.
\item The mixed V-line transform of the field $(\D^{\perp})^{m-\ell}\D^{\ell}\chi^{(\ell)}$ is connected with the Radon transform of its potential $\chi^{(\ell)}$ by the following relation
\begin{align}
     \label{odd and even m tensor}
 &\Mc_{\ell}\left((\D^{\perp})^{m-\ell}\D^{\ell}\chi^{(\ell)}\right)(\phi,\psi)\nonumber\\ & \qquad = C^{\ell}_m 
	\begin{cases}
   \frac{\partial^m}{\partial s^m}\left[\Rc\chi^{(\ell)}\left(s,\xi^{-}(\phi,\psi)\right) +(-1)^m\Rc\chi^{(\ell)}\left(s,\xi^{+}(\phi,\psi)\right)\right],  \quad  & \ell = \text{even} \\ 
   -\frac{\partial^m}{\partial s^m}\left[\Rc\chi^{(\ell)}\left(s,\xi^{-}(\phi,\psi)\right) +(-1)^m\Rc\chi^{(\ell)}\left(s,\xi^{+}(\phi,\psi)\right)\right],   \quad & \ell = \text{odd} 
	\end{cases}
\end{align}
where $\displaystyle C^{\ell}_{m} = \frac{\ell!(m-\ell)!}{m!}, \quad 0\leq \ell\leq m$ and $ s=\sin{\psi}$.\\
    \end{enumerate}
\end{prop}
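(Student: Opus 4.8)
The plan is to reduce both assertions to one pointwise algebraic identity along each branch of the V-line and then integrate. Fix $(\phi,\psi)$ and abbreviate $\vuu=\vuu(\phi,\psi)$, $\vv=\vv(\phi,\psi)$. Since these directions are frozen, the directional derivatives $D_{\vuu}=\langle\vuu,\nabla\rangle$, $D_{\vuu^{\perp}}=\langle\vuu^{\perp},\nabla\rangle$ (and their $\vv$-analogues) are constant-coefficient operators and hence commute; this is what drives the argument. The first ingredient I would record is the symbol identity
\[
\big\langle \zeta^{\,m},\,(\D^{\perp})^{m-j}\D^{j}\chi\big\rangle=(-1)^{m-j}\,\langle\zeta,\nabla\rangle^{\,j}\,\langle\zeta^{\perp},\nabla\rangle^{\,m-j}\chi,\qquad \zeta\in\Rb^{2},
\]
valid for any scalar $\chi$ and immediate from the definitions of $\D,\D^{\perp}$ (after full contraction with $\zeta$ each $\D$ produces the factor $\langle\zeta,\nabla\rangle$ and each $\D^{\perp}$ the factor $-\langle\zeta^{\perp},\nabla\rangle$). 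Polarising it in the orthonormal frame $\{\vuu^{\perp},\vuu\}$ --- substitute $\zeta=s\,\vuu^{\perp}+t\,\vuu$, so $\zeta^{\perp}=-s\,\vuu+t\,\vuu^{\perp}$, and read off the coefficient of $s^{\ell}t^{m-\ell}$, which carries the factor $C^{\ell}_{m}=\binom{m}{\ell}^{-1}$ --- writes $\big\langle(\vuu^{\perp})^{\ell}\vuu^{m-\ell},(\D^{\perp})^{m-j}\D^{j}\chi\big\rangle$ as an explicit combination of monomials $D_{\vuu}^{a}D_{\vuu^{\perp}}^{b}\chi$ with $a+b=m$. The combinatorial core of the proof is the resulting bookkeeping: in this combination the exponent $a$ of the tangential derivative $D_{\vuu}$ is $\ge|j-\ell|$ in every monomial, and $a=0$ occurs --- once, in the pure term $(-1)^{m-\ell}D_{\vuu^{\perp}}^{m}\chi$ --- precisely when $j=\ell$. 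The analogous statements hold along the $\vv$-branch.

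Granting this, I integrate along the branches. On the ray $t\mapsto\Phi(\phi)+t\vuu$ the operator $D_{\vuu}$ is $\tfrac{d}{dt}$, so (using that $D_{\vuu}$ commutes with $D_{\vuu^{\perp}}$) each monomial with $a\ge1$ contributes
\[
\int_{0}^{\infty}\frac{d^{a}}{dt^{a}}\Big(D_{\vuu^{\perp}}^{b}\chi\big(\Phi(\phi)+t\vuu\big)\Big)\,dt=\Big[\frac{d^{a-1}}{dt^{a-1}}D_{\vuu^{\perp}}^{b}\chi\big(\Phi(\phi)+t\vuu\big)\Big]_{t=0}^{t=\infty}=0,
\]
the upper endpoint vanishing since $\chi$ is compactly supported in $\Db$, and the lower endpoint vanishing since the vertex $\Phi(\phi)$ lies on $\partial\Db$ while the bracketed expression is a derivative of $\chi$ of order $(a-1)+b=m-1$, which is killed by the hypothesis that $\chi^{(j)},\dots,\D^{m-1}\chi^{(j)}$ vanish on $\partial\Db$. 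When $j\ne\ell$ every monomial has $a\ge1$, so both branch integrals vanish, giving $\Mc_{\ell}\big((\D^{\perp})^{m-j}\D^{j}\chi^{(j)}\big)(\phi,\psi)=0$; this includes the endpoint cases $\Mc_{0}=\Lc$, $\Mc_{m}=\Tc$, which is part $(1)$.

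When $j=\ell$ only the $a=0$ monomial survives, so
\[
\Mc_{\ell}\big((\D^{\perp})^{m-\ell}\D^{\ell}\chi^{(\ell)}\big)(\phi,\psi)=(-1)^{m-\ell}C^{\ell}_{m}\Big[\int_{0}^{\infty}D_{\vuu^{\perp}}^{m}\chi^{(\ell)}\big(\Phi(\phi)+t\vuu\big)\,dt+\int_{0}^{\infty}D_{\vv^{\perp}}^{m}\chi^{(\ell)}\big(\Phi(\phi)+t\vv\big)\,dt\Big].
\]
Since $\chi^{(\ell)}$ is supported inside $\Db$ and the opposite rays leave that support, each half-line integral coincides with the corresponding full-line Radon integral, exactly as in \eqref{eq:relation between Radon and W V line}. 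Applying $\Rc(\partial_{x_{k}}h)(s,\Phi(\cdot))=\Phi(\cdot)_{k}\,\partial_{s}\Rc h$ a total of $m$ times, together with the elementary facts $\langle\vuu^{\perp},\Phi(\xi^{-}(\phi,\psi))\rangle=-1$ and $\langle\vv^{\perp},\Phi(\xi^{+}(\phi,\psi))\rangle=1$ (read off from the formulas for $\vuu,\vv,\xi^{\pm}$), converts the first integral into $(-1)^{m}\partial_{s}^{m}\Rc\chi^{(\ell)}(s,\xi^{-}(\phi,\psi))$ and the second into $\partial_{s}^{m}\Rc\chi^{(\ell)}(s,\xi^{+}(\phi,\psi))$, with $s=\sin\psi$. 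Collecting the prefactor $(-1)^{m-\ell}(-1)^{m}=(-1)^{\ell}$ and pulling one factor $(-1)^{m}$ out of the bracket yields exactly \eqref{odd and even m tensor}, with the $\pm$ sign determined by the parity of $\ell$. I expect the polarisation-and-counting step --- proving $a\ge|j-\ell|$ and isolating the unique surviving monomial --- to be the main obstacle, while the sign and constant bookkeeping at the end is delicate but routine. The whole argument is the V-line analogue of the proof of \cite[Proposition~5.1]{derevtsov3}.
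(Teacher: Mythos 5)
Your proposal is correct, and it actually supplies a proof that the paper omits: the authors state Proposition \ref{m-tensor Proposition} without proof, deferring to \cite[Proposition 5.1]{derevtsov3}, so your argument is filling a genuine gap rather than paralleling a written one. I checked the key steps: the symbol identity $\langle\zeta^m,(\D^\perp)^{m-j}\D^j\chi\rangle=(-1)^{m-j}\langle\zeta,\nabla\rangle^j\langle\zeta^\perp,\nabla\rangle^{m-j}\chi$ follows from the paper's componentwise definitions of $\D,\D^\perp$; the polarisation $\zeta=s\vuu^\perp+t\vuu$, $\zeta^\perp=-s\vuu+t\vuu^\perp$ gives the $D_\vuu$-exponent $a=j+\ell-2p$ with $p\le\min(j,\ell)$, hence $a\ge|j-\ell|$ with equality uniquely at $p=j=\ell$, exactly as you claim; the boundary term in the integration by parts is a derivative of $\chi^{(j)}$ of order exactly $(a-1)+b=m-1$, which is precisely what the hypothesis kills; and the signs $\langle\vuu^\perp,\Phi(\xi^-)\rangle=-1$, $\langle\vv^\perp,\Phi(\xi^+)\rangle=1$ together with $(-1)^{m-\ell}(-1)^m=(-1)^\ell$ reproduce \eqref{odd and even m tensor}, consistent with the special cases the paper does work out (Proposition \ref{vector proposition} and the $m=1,2$ computations of Section \ref{sec: Compwise recovery}). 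One small imprecision: the potentials $\chi^{(j)}$ are not compactly supported in $\Db$ --- they are smooth on $\Db$ with derivatives up to order $m-1$ vanishing on $\partial\Db$ --- so the vanishing at the upper limit $t\to\infty$ should be justified by the ray leaving $\overline{\Db}$ (where the zero extension lives) rather than by compact support; this does not affect the argument, since the order-$(m-1)$ boundary data is exactly what makes the zero extension regular enough for your manipulations.
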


\begin{theorem}[Kernel Description]
    Let $\vf \in C^{\infty}_c(S^m(\Db))$. Then $\Mc_{\ell}\vf=0$ if and only if $\displaystyle\vf = \sum_{j \neq \ell}(\D^{\perp})^{m-j}\D^{j}\chi^{(j)}$, $\forall \ 0\leq j,\ell\leq m$ for scalar functions $\chi^{(j)}$ satisfying the boundary conditions $ \chi^{(j)}|_{\partial \Db}=0,\dots,\D^{(m-1)}\chi^{(j)}|_{\partial \Db}=0$.
\end{theorem}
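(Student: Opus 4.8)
The plan is to leverage the decomposition theorem (Theorem \ref{m-tensor decomp}) together with Proposition \ref{m-tensor Proposition} exactly as was done for the vector field case. First I would prove the ``if'' direction: suppose $\vf = \sum_{j\neq\ell}(\D^\perp)^{m-j}\D^j\chi^{(j)}$ with the stated boundary conditions on the potentials. Applying $\Mc_\ell$ and using linearity, $\Mc_\ell\vf = \sum_{j\neq\ell}\Mc_\ell\!\left((\D^\perp)^{m-j}\D^j\chi^{(j)}\right)$, and by part (1) of Proposition \ref{m-tensor Proposition} every summand vanishes since $j\neq\ell$, hence $\Mc_\ell\vf=0$. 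This direction is immediate and requires no new work.

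\medskip\noindent For the ``only if'' direction, suppose $\Mc_\ell\vf=0$. By Theorem \ref{m-tensor decomp}, write $\vf = \sum_{j=0}^m(\D^\perp)^{m-j}\D^j\chi^{(j)}$ with the prescribed boundary conditions. Applying $\Mc_\ell$ and invoking part (1) of Proposition \ref{m-tensor Proposition} to kill all terms with $j\neq\ell$, we are left with
\begin{equation*}
0 = \Mc_\ell\vf(\phi,\psi) = \Mc_\ell\!\left((\D^\perp)^{m-\ell}\D^\ell\chi^{(\ell)}\right)(\phi,\psi).
\end{equation*}
By part (2) of Proposition \ref{m-tensor Proposition}, up to the nonzero constant $\pm C^\ell_m$ this equals $\frac{\partial^m}{\partial s^m}\big[\Rc\chi^{(\ell)}(s,\xi^-(\phi,\psi)) + (-1)^m\Rc\chi^{(\ell)}(s,\xi^+(\phi,\psi))\big]$ with $s=\sin\psi$. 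The strategy is now to integrate this vanishing $m$-th $s$-derivative $m$ times from $s$ to $\infty$ (using that $\Rc\chi^{(\ell)}$ and its $s$-derivatives of order up to $m-1$ vanish for $s\geq 1$, which follows from the boundary conditions $\chi^{(\ell)}|_{\partial\Db}=\cdots=\D^{(m-1)}\chi^{(\ell)}|_{\partial\Db}=0$ and compact support) to conclude that the combination $\Rc\chi^{(\ell)}(s,\xi^-) + (-1)^m\Rc\chi^{(\ell)}(s,\xi^+)$ itself is identically zero. Recognizing this combination as a weighted V-line transform $\Vc_w\chi^{(\ell)}$ with $c_1 = 1$, $c_2 = (-1)^m$ (both nonzero) via equation \eqref{eq:relation between Radon and W V line}, the injectivity of $\Vc_w$ — guaranteed by the inversion formulas \eqref{weighted Recovery formula-I}, \eqref{weighted Recovery formula-II} — forces $\chi^{(\ell)}\equiv 0$. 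Hence $\vf = \sum_{j\neq\ell}(\D^\perp)^{m-j}\D^j\chi^{(j)}$, which is the claimed form.

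\medskip\noindent I expect the main obstacle to be the bookkeeping in the repeated-integration step: one must be careful that the constants of integration all vanish, which hinges on the decay/support statement that $\partial_s^k\Rc\chi^{(\ell)}(s,\cdot)=0$ for $s\geq 1$ and $0\leq k\leq m-1$. This itself needs the boundary conditions in Theorem \ref{m-tensor decomp} together with the relation $\Rc(\partial h/\partial x_k)(s,\Phi(\phi)) = (\Phi(\phi))_k\,\partial_s\Rc h(s,\Phi(\phi))$, so that differentiating $\Rc\chi^{(\ell)}$ in $s$ corresponds to Radon transforms of derivatives of $\chi^{(\ell)}$, all of which are supported in $\Db$ and vanish near $\partial\Db$. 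A secondary point to handle cleanly is that the two arguments $\xi^\pm(\phi,\psi)$ both depend on $\psi=\sin^{-1}s$, so the integration in $s$ is along the curves traced out in Radon-parameter space, exactly as in the $m=1$ proof; I would simply mirror that computation. Everything else is a direct consequence of results already established, so the proof is short.
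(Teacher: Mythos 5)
Your proposal is correct and follows essentially the same route as the paper: the ``if'' direction is exactly part (1) of Proposition \ref{m-tensor Proposition}, and the ``only if'' direction is exactly the paper's proof of Theorem \ref{Recovery from M_lf} (decompose $\vf$, kill the $j\neq\ell$ terms, integrate the $m$-th $s$-derivative $m$ times to land on a weighted V-line transform of $\chi^{(\ell)}$ with nonzero weights, and invert), which the paper cites as ``an implication of inversion formulas derived below.'' Your extra care about the vanishing of the integration constants and the $\psi$-dependence of $\xi^{\pm}$ matches what the paper implicitly assumes in the $m=1$ computation.
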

\begin{proof} If $\displaystyle\vf=\sum_{j \neq \ell}(\D^{\perp})^{m-j}\D^{j}\chi^{(j)}$, then the first part of Proposition \ref{m-tensor Proposition} implies $\Mc_{\ell}\vf=0$. The other direction is an implication of inversion formulas derived below. 
\end{proof}
\begin{proof}[\textbf{Proof of Theorem \ref{Recovery from M_lf}}]
We know that any symmetric $m$-tensor field $\vf$ can be decomposed as follows:
\begin{align*}
   & \vf=  \sum_{j=0}^{m}(\D^{\perp})^{m-j}\D^{j}\chi^{(j)},\nonumber\\
   & \chi^{(j)}|_{\partial \Db}=0,\dots,\D^{(m-1)}\chi^{(j)}|_{\partial \Db}=0, \quad \forall \ 0\leq j \leq m.
\end{align*}
From the kernel description, we can only hope to recover $\chi^{(\ell)}$ from  $\Mc_{\ell}\vf$. Therefore, applying $\Mc_{\ell}$ on the above relation, we obtain
\begin{align*}
 \Mc_{\ell}\vf(\phi,\psi) =  \Mc_{\ell}\left((\D^{\perp})^{m-\ell}\D^{\ell}\chi^{(\ell)}\right)(\phi,\psi), \quad \text{since}\quad  \Mc_{\ell}\bigg(\sum_{j \neq \ell}(\D^{\perp})^{m-j}\D^{j}\chi^{(j)}\bigg)(\phi,\psi)=0. 
\end{align*}
Now, we repeatedly integrate the above equation \eqref{odd and even m tensor} $m$-times 
to obtain
\begin{align*}
&\underbrace{\int_{s}^{\infty}\int_{t_{m}}^{\infty}\cdots \int_{t_2}^{\infty}}_\textrm{$m$-times}\Mc_{\ell}\vf(\phi,\sin^{-1}t_1)\, dt_1\cdots dt_{m-1}dt_{m} \\& \qquad = C^{\ell}_m 
	\begin{cases}
  (-1)^m \Rc\chi^{(\ell)}\left(s,\xi^{-}(\phi,\psi)\right) +\Rc\chi^{(\ell)}\left(s,\xi^{+}(\phi,\psi)\right),  \quad  & \ell = \text{even} \\ 
   (-1)^{m+1}\Rc\chi^{(\ell)}\left(s,\xi^{-}(\phi,\psi)\right) -\Rc\chi^{(\ell)}\left(s,\xi^{+}(\phi,\psi)\right),   \quad & \ell = \text{odd} 
	\end{cases}
 \\& \qquad =
	\begin{cases}
  \Vc_w\chi^{(\ell)}, \quad \text{with} \quad c_1 = (-1)^m C^{\ell}_m\quad \text{and}\quad c_2 = C^{\ell}_m,   & \ell = \text{even} \\ 
  \Vc_w\chi^{(\ell)},   \quad \text{with} \quad  c_1 = (-1)^{m+1} C^{\ell}_m\quad \text{and}\quad c_2 = -C^{\ell}_m, & \ell = \text{odd}. 
	\end{cases}
\end{align*}
From the relation derived above, we get the weighted V-line transform of $\chi^{(\ell)}$ with appropriate choices of constants $c_1$ and $c_2$. Therefore, we can recover $\chi^{(\ell)}$ explicitly by using any of the two inversion formulas of $\Vc_w$ given in \eqref{weighted Recovery formula-I} and \eqref{weighted Recovery formula-II}.

\end{proof}
\section{Proof of Theorem \ref{Recovery from M_lf} (Approach 2)}\label{sec: Compwise recovery}
This section is devoted to studying the componentwise recovery of a symmetric $m$-tensor field from the combination of longitudinal, transverse, and mixed V-line transforms. We present the proof for vector fields and symmetric 2-tensor fields in detail. 


\subsection{Vector fields}
In this subsection, we prove Theorem \ref{Recovery from M_lf} for $m=1$, i.e., we derive an inversion formula to recover vector fields from the knowledge of its longitudinal and transverse V-line transforms ($\Lc\vf,\Tc\vf$). 
\begin{proof}[\textbf{Proof of Theorem \ref{Recovery from M_lf} $(m=1)$}]
From Definition \ref{longi V-line} of longitudinal V-line transform of a vector field $\vf$, we have
\begin{align}\label{vector_definition L}
   \Lc\vf(\phi,\psi)&= \Rc(\vuu\cdot \vf)\left(\sin {\psi},\left(\xi^{-}(\phi,\psi)\right)\right) + \Rc( \vv\cdot \vf) \left(\sin{\psi},\left(\xi^{+}(\phi,\psi)\right) \right)
    \nonumber \\
    & =  -\cos(\phi-\psi) \Rc f_{1}\left(\sin{\psi}, \xi^{-}(\phi,\psi)\right) - \sin(\phi-\psi)\Rc f_{2}\left(\sin{\psi}, \xi^{-}(\phi,\psi)\right) \nonumber\\
    & \quad - \cos(\phi+\psi)\Rc f_{1}\left(\sin{\psi}, \xi^{+}(\phi,\psi)\right) - \sin(\phi+\psi)\Rc f_{2}\left(\sin{\psi}, \xi^{+}(\phi,\psi)\right).
\end{align}
Similarly, Definition \ref{trans V-line} of transverse V-line transform of a vector field $\vf$ gives
\begin{align}\label{vector_definition T}
    \Tc\vf(\phi,\psi)&= \Rc(\vuu^{\perp}\cdot \vf)\left(\sin {\psi},\left(\xi^{-}(\phi,\psi)\right)\right) + \Rc( \vv^{\perp}\cdot \vf) \left(\sin{\psi},\left(\xi^{+}(\phi,\psi)\right) \right)
    \nonumber \\
    & = \sin(\phi-\psi) \mathcal{R}f_{1}\left(\sin{\psi}, \xi^{-}(\phi,\psi)\right) - \cos(\phi-\psi)\Rc f_{2}\left(\sin{\psi}, \xi^{-}(\phi,\psi)\right) \nonumber\\
    & \quad + \sin(\phi+\psi)\Rc f_{1}\left(\sin{\psi}, \xi^{+}(\phi,\psi)\right) -  \cos(\phi+\psi)\Rc f_{2}\left(\sin{\psi}, \xi^{+}(\phi,\psi)\right).
\end{align}
Combining equations \eqref{vector_definition L} and \eqref{vector_definition T} in the following way: 
\begin{align}\label{vector_addition of L_T}
    \Lc \vf(\phi,\psi)+ \mathrm{i} \Tc \vf(\phi,\psi) = &- e^{-\mathrm{i}(\phi-\psi)}\Rc f_{1}\left(\sin{\psi}, \xi^{-}(\phi,\psi)\right) - \mathrm{i}e^{-\mathrm{i}(\phi-\psi)} \Rc f_{2}\left(\sin{\psi}, \xi^{-}(\phi,\psi)\right) \nonumber\\
    & \quad - e^{-\mathrm{i}(\phi+\psi)} \Rc f_{1}\left(\sin{\psi}, \xi^{+}(\phi,\psi)\right) - \mathrm{i} e^{-\mathrm{i}(\phi+\psi)}\Rc f_{2}\left(\sin{\psi}, \xi^{+}(\phi,\psi)\right). \\
\label{vector_substraction of L_T}
    \Lc \vf(\phi,\psi)- \mathrm{i} \Tc \vf(\phi,\psi) = & -e^{\mathrm{i}(\phi-\psi)}\Rc f_{1}\left(\sin{\psi}, \xi^{-}(\phi,\psi)\right) + \mathrm{i}e^{\mathrm{i}(\phi-\psi)} \Rc f_{2}\left(\sin{\psi}, \xi^{-}(\phi,\psi)\right) \nonumber\\
    & \quad -e^{\mathrm{i}(\phi+\psi)} \Rc f_{1}\left(\sin{\psi}, \xi^{+}(\phi,\psi)\right) + \mathrm{i} e^{\mathrm{i}(\phi+\psi)}\Rc f_{2}\left(\sin{\psi}, \xi^{+}(\phi,\psi)\right). 
\end{align}
Let us multiply equations \eqref{vector_addition of L_T} 
 and \eqref{vector_substraction of L_T} by 
$e^{\mathrm{i}(\phi-\psi)}$ and $e^{-\mathrm{i}(\phi-\psi)}$, respectively then adding them gives 
\begin{align}\label{Vector_P}
& P(\phi,\psi):= e^{\mathrm{i}(\phi-\psi)}( \Lc \vf+ \mathrm{i} \Tc \vf)(\phi,\psi) + e^{-\mathrm{i}(\phi-\psi)} (\Lc \vf- \mathrm{i} \Tc \vf)(\phi,\psi) \nonumber\\
& \qquad = - 2\Rc f_1\left(\sin{\psi}, \xi^{-}(\phi,\psi)\right)-2\cos(2\psi)\Rc f_1\left(\sin{\psi}, \xi^{+}(\phi,\psi)\right)\nonumber
\\& \qquad \qquad-2\sin(2\psi)\Rc f_2\left(\sin{\psi}, \xi^{+}(\phi,\psi)\right).
\end{align}
Now, we multiply $e^{\mathrm{i}(\phi+\psi)}$ with \eqref{vector_addition of L_T} and  $e^{-\mathrm{i}(\phi+\psi)}$ with \eqref{vector_substraction of L_T} then by adding them, we get 
\begin{align}\label{Vector_Q}
   & Q(\phi,\psi):= e^{\mathrm{i}(\phi+\psi)}( \Lc \vf+ \mathrm{i} \Tc \vf)(\phi,\psi) + e^{-\mathrm{i}(\phi+\psi)} (\Lc \vf- \mathrm{i} \Tc \vf)(\phi,\psi)\nonumber\\
   & \qquad = -2\cos(2\psi)\Rc f_1\left(\sin{\psi}, \xi^{-}(\phi,\psi)\right) + 2\sin(2\psi)\Rc f_2\left(\sin{\psi}, \xi^{-}(\phi,\psi)\right)\nonumber
   \\& \qquad \qquad-2\Rc f_1\left(\sin{\psi}, \xi^{+}(\phi,\psi)\right). 
\end{align}
Using the Fourier series expansion for equations \eqref{Vector_P} and \eqref{Vector_Q}, we get the $n^{th}$ Fourier coefficients of $P(\phi,\psi)$ and $Q(\phi,\psi)$ as follows
\begin{align}\label{vector_Form of P_n}
    P_n(\psi) &=  - 2(\Rc f_{1})_n(\sin{\psi})e^{-\mathrm{i}n\left(\psi-\frac{\pi}{2}\right)}-2\left\{\cos(2\psi)(\Rc f_{1})_n(\sin{\psi}) + \sin(2\psi) (\Rc f_{2})_n(\sin{\psi})\right\}e^{\mathrm{i}n\left(\psi-\frac{\pi}{2}\right)}.\\
    \label{vector_Form of Q_n}
     Q_n(\psi) &=  -2\left\{\cos(2\psi)(\Rc f_{1})_n(\sin{\psi}) + \sin(2\psi)(\Rc f_{2})_n(\sin{\psi})\right\}e^{-\mathrm{i}n\left(\psi-\frac{\pi}{2}\right)}- 2 (\Rc f_{1})_n(\sin{\psi})e^{\mathrm{i}n\left(\psi-\frac{\pi}{2}\right)}.
\end{align}
Multiplying equation \eqref{vector_Form of P_n} by $e^{-\mathrm{i}n\left(\psi-\frac{\pi}{2}\right)}$  and equation \eqref{vector_Form of Q_n} by $e^{\mathrm{i}n\left(\psi-\frac{\pi}{2}\right)}$ and then adding them, we get
\begin{align*}
   e^{-\mathrm{i}n\left(\psi-\frac{\pi}{2}\right)} P_n(\psi) + e^{\mathrm{i}n\left(\psi-\frac{\pi}{2}\right)} Q_n(\psi)= -4\left[\cos{2\psi} +\cos(2n\left(\psi-\frac{\pi}{2}\right))\right] (\Rc f_{1})_n(\sin{\psi}).
\end{align*}
Hence, we have the $n^{th}$ Fourier coefficient of $\Rc f_{1}$ 
\begin{align}
(\Rc f_{1})_n(\sin{\psi}) &= - \frac{e^{-\mathrm{i}n\left(\psi-\frac{\pi}{2}\right)} P_n(\psi) + e^{\mathrm{i}n\left(\psi-\frac{\pi}{2}\right)} Q_n(\psi)}{ 4\left[\cos{2\psi} +\cos(2n\left(\psi-\frac{\pi}{2}\right))\right] } \\ \label{vector_form of Rf_1}
\implies \quad \quad (\Rc f_{1})_n(s) &= - \frac{e^{-\mathrm{i}n\left(\sin^{-1}s-\frac{\pi}{2}\right)} P_n(\sin^{-1}s) + e^{\mathrm{i}n\left(\sin^{-1}s-\frac{\pi}{2}\right)} Q_n(\sin^{-1}s)}{ 4\left[\cos(2\sin^{-1}s) +\cos(2n\left(\sin^{-1}s-\frac{\pi}{2}\right))\right] },\quad s=\sin{\psi}.
\end{align}
Using the above relation and \eqref{vector_Form of P_n}, we get the $n^{th}$ Fourier coefficient of $\Rc f_{2}$ as follows
 \begin{align}\label{vector_form of Rf_2}
    (\Rc f_{2})_n(\sin\psi) &= \frac{1}{2\sin{2\psi}} \left(-e^{-\mathrm{i}n(\psi-\frac{\pi}{2})}P_n(\psi) - 2(\Rc f_{1})_n(\sin{\psi})\big[e^{-2\mathrm{i}n\left(\psi-\frac{\pi}{2}\right)}+ \cos{2\psi}\big] \right).
 \end{align}
Once, we obtain $n^{th}$ Fourier coefficients of the Radon transforms of $f_1$ and $f_2$ 
in equations \eqref{vector_form of Rf_1} and \eqref{vector_form of Rf_2} then  $f_1$ and $f_2$ are explicitly reconstructed by using any of the two formulas \eqref{Recovery formula-I} and \eqref{Recovery formula-II}.
 \end{proof}
\subsection{Symmetric \texorpdfstring{$2$}{2}-tensor fields}
Here, we use the knowledge of longitudinal, transverse, and mixed V-line transforms ($\Lc\vf,\Tc\vf,\Mc\vf$) to recover a symmetric $2$-tensor field.
\begin{proof}[\textbf{Proof of Theorem \ref{Recovery from M_lf} $(m=2)$}]
From Definition \ref{longi V-line} of longitudinal V-line transform of a symmetric $2$-tensor field, we have
\begin{align}\label{2-tensor_longitudinal}
\Lc\vf(\phi,\psi) &=\Rc\left(\left\langle \vuu^2,  \vf  \right\rangle\right)\left(\sin{\psi},\xi^{-}(\phi,\psi) \right)+ \Rc\left(\left\langle \vv^2, \vf \right\rangle\right)\left(\sin {\psi},\xi^{+}(\phi,\psi) \right) \nonumber\\
& = \cos^2(\phi-\psi) \mathcal{R}f_{11}\left(\sin{\psi}, \xi^{-}(\phi,\psi)\right) + 2\cos(\phi-\psi)\sin(\phi-\psi)\Rc f_{12}\left(\sin{\psi}, \xi^{-}(\phi,\psi)\right)\nonumber\\
& \quad + \sin^2(\phi-\psi)\Rc f_{22}\left(\sin{\psi}, \xi^{-}(\phi,\psi)\right) + \cos^2(\phi+\psi)\Rc f_{11}\left(\sin{\psi}, \xi^{+}(\phi,\psi)\right)\nonumber\\
& \quad + 2 \cos(\phi+\psi)\sin(\phi+\psi)\Rc f_{12}\left(\sin{\psi}, \xi^{+}(\phi,\psi)\right) + \sin^2(\phi+\psi)\Rc f_{22}\left(\sin{\psi}, \xi^{+}(\phi,\psi)\right). 
\end{align}
Using Definition \ref{trans V-line} of transverse V-line transform of a symmetric $2$-tensor field, we have
\begin{align}\label{2-tensor_transverse}
\Tc\vf(\phi,\psi) &=\Rc\left(\left\langle (\vuu^\perp)^2,  \vf  \right\rangle\right)\left(\sin {\psi},\xi^{-}(\phi,\psi) \right)+ \Rc\left(\left\langle (\vv^\perp)^2, \vf \right\rangle\right)\left(\sin {\psi},\xi^{+}(\phi,\psi) \right) \nonumber\\
& = \sin^2(\phi-\psi) \mathcal{R}f_{11}\left(\sin{\psi}, \xi^{-}(\phi,\psi)\right) - 2\cos(\phi-\psi)\sin(\phi-\psi)\Rc f_{12}\left(\sin{\psi}, \xi^{-}(\phi,\psi)\right)\nonumber\\
& \quad + \cos^2(\phi-\psi)\Rc f_{22}\left(\sin{\psi}, \xi^{-}(\phi,\psi)\right) + \sin^2(\phi+\psi)\Rc f_{11}\left(\sin{\psi}, \xi^{+}(\phi,\psi)\right)\nonumber\\
& \quad - 2 \cos(\phi+\psi)\sin(\phi+\psi)\Rc f_{12}\left(\sin{\psi}, \xi^{+}(\phi,\psi)\right) + \cos^2(\phi+\psi)\Rc f_{22}\left(\sin{\psi}, \xi^{+}(\phi,\psi)\right). 
\end{align}
Similarly, from Definition \ref{mixed V-line} of mixed V-line transform of a symmetric $2$-tensor field, we have
\begin{align}\label{2-tensor_mixed}
\Mc\vf(\phi,\psi) &=\Rc\left(\left\langle \vuu^\perp\vuu,  \vf \right\rangle\right)\left(\sin (\psi),\xi^{-}(\phi,\psi) \right) + \Rc\left(\left\langle \vv^\perp\vv,  \vf \right\rangle\right)\left(\sin (\psi),\xi^{+}(\phi,\psi) \right)\nonumber\\
& = -\cos(\phi-\psi) \sin(\phi-\psi) \mathcal{R}f_{11}\left(\sin{\psi}, \xi^{-}(\phi,\psi)\right) + \cos^2(\phi-\psi)\Rc f_{21}\left(\sin{\psi}, \xi^{-}(\phi,\psi)\right)\nonumber\\
& \quad - \sin^2(\phi-\psi)\Rc f_{12}\left(\sin{\psi}, \xi^{-}(\phi,\psi)\right)+\sin(\phi-\psi)\cos(\phi-\psi)\Rc f_{22}\left(\sin{\psi}, \xi^{-}(\phi,\psi)\right) \nonumber\\
& \quad - \sin(\phi+\psi)\cos(\phi+\psi)\Rc f_{11}(\sin{\psi}, \xi^{+}(\phi,\psi)) +  \cos^2(\phi+\psi)\Rc f_{21}\left(\sin{\psi}, \xi^{+}(\phi,\psi)\right)\nonumber\\
& \quad - \sin^2(\phi+\psi)\Rc f_{12}\left(\sin{\psi}, \xi^{+}(\phi,\psi)\right) + \sin(\phi+\psi)\cos(\phi+\psi)\Rc f_{22}\left(\sin{\psi}, \xi^{+}(\phi,\psi)\right). 
\end{align}
Adding equations \eqref{2-tensor_longitudinal} and \eqref{2-tensor_transverse} gives 
\begin{align}\label{addition of L_T}R(\phi,\psi) := \Lc \vf(\phi,\psi)+ \Tc \vf(\phi,\psi) = &\Rc f_{11}\left(\sin{\psi}, \xi^{-}(\phi,\psi)\right) + \Rc f_{22}\left(\sin{\psi}, \xi^{-}(\phi,\psi)\right) \nonumber\\
& \quad + \Rc f_{11}\left(\sin{\psi}, \xi^{+}(\phi,\psi)\right) + \Rc f_{22}\left(\sin{\psi}, \xi^{+}(\phi,\psi)\right). 
\end{align}
Combining equations \eqref{2-tensor_longitudinal}, \eqref{2-tensor_transverse}, and \eqref{2-tensor_mixed} in the following two ways:
\begin{align}\label{difference L_T and sum M}
(\Lc \vf- \Tc \vf + 2\mathrm{i} \Mc \vf)(\phi,\psi) & = e^{-2\mathrm{i}(\phi-\psi)}\Rc f_{11}\left(\sin{\psi}, \xi^{-}(\phi,\psi)\right)+ 2\mathrm{i}e^{-2\mathrm{i}(\phi-\psi)}\Rc f_{12}\left(\sin{\psi}, \xi^{-}(\phi,\psi)\right)\nonumber\\
& \quad - e^{-2\mathrm{i}(\phi-\psi)}\Rc f_{22}\left(\sin{\psi}, \xi^{-}(\phi,\psi)\right) + e^{-2\mathrm{i}(\phi+\psi)}\Rc f_{11}\left(\sin{\psi}, \xi^{+}(\phi,\psi)\right)\nonumber\\
& \quad + 2 \mathrm{i}e^{-2\mathrm{i}(\phi+\psi)}\Rc f_{12}\left(\sin{\psi}, \xi^{+}(\phi,\psi)\right)- e^{-2\mathrm{i}(\phi+\psi)}\Rc f_{22}\left(\sin{\psi}, \xi^{+}(\phi,\psi)\right)\\
\label{difference of L_T_M}
(\Lc \vf- \Tc \vf - 2\mathrm{i} \Mc \vf)(\phi,\psi) & = e^{2\mathrm{i}(\phi-\psi)}\Rc f_{11}\left(\sin{\psi}, \xi^{-}(\phi,\psi)\right)- 2\mathrm{i}e^{2\mathrm{i}(\phi-\psi)}\Rc f_{12}\left(\sin{\psi}, \xi^{-}(\phi,\psi)\right)\nonumber\\
& \quad - e^{2\mathrm{i}(\phi-\psi)}\Rc f_{22}\left(\sin{\psi}, \xi^{-}(\phi,\psi)\right) + e^{2\mathrm{i}(\phi+\psi)}\Rc f_{11}\left(\sin{\psi}, \xi^{+}(\phi,\psi)\right)\nonumber\\
& \quad - 2 \mathrm{i}e^{2\mathrm{i}(\phi+\psi)}\Rc f_{12}\left(\sin{\psi}, \xi^{+}(\phi,\psi)\right)- e^{2\mathrm{i}(\phi+\psi)}\Rc f_{22}\left(\sin{\psi}, \xi^{+}(\phi,\psi)\right).
\end{align}
Multiply equation \eqref{difference L_T and sum M} with $e^{2\mathrm{i}(\phi-\psi)}$ and equation \eqref{difference of L_T_M} with $e^{-2\mathrm{i}(\phi-\psi)}$ then by adding them we get
\begin{align}\label{2-tensor_P}
 P(\phi,\psi) & := \nonumber e^{2\mathrm{i}(\phi-\psi)} \times (\Lc \vf- \Tc \vf + 2\mathrm{i} \Mc \vf)(\phi,\psi) +  e^{-2\mathrm{i}(\phi-\psi)} \times  (\Lc \vf- \Tc \vf - 2\mathrm{i} \Mc \vf)(\phi,\psi)\\
 & \nonumber= 2 (\Rc f_{11}-\Rc f_{22})\left(\sin{\psi}, \xi^{-}(\phi,\psi)\right) + 2\mathrm{i}(e^{-4\mathrm{i}\psi}-e^{4\mathrm{i}\psi})\Rc f_{12}\left(\sin{\psi}, \xi^{+}(\phi,\psi)\right) \\
  & \quad \qquad   + (e^{-4\mathrm{i}\psi}+e^{4\mathrm{i}\psi})(\Rc f_{11}-\Rc f_{22})\left(\sin{\psi}, \xi^{+}(\phi,\psi)\right).
\end{align}
Similarly, we multiply \eqref{difference L_T and sum M} by $e^{2\mathrm{i}(\phi+\psi)}$ and equation \eqref{difference of L_T_M} by $e^{-2\mathrm{i}(\phi+\psi)}$ then adding gives 
\begin{align}\label{2-tensor_Q}
 Q(\phi,\psi) & := \nonumber e^{2\mathrm{i}(\phi+\psi)} \times (\Lc \vf- \Tc \vf + 2\mathrm{i} \Mc \vf)(\phi,\psi) +  e^{-2\mathrm{i}(\phi+\psi)} \times  (\Lc \vf- \Tc \vf - 2\mathrm{i} \Mc \vf)(\phi,\psi)\\
 & \nonumber = (e^{4\mathrm{i}\psi}+e^{-4\mathrm{i}\psi}) (\Rc f_{11}-\Rc f_{22})\left(\sin{\psi}, \xi^{-}(\phi,\psi)\right)+ 2(\Rc f_{11}-\Rc f_{22})\left(\sin{\psi}, \xi^{+}(\phi,\psi)\right)\\
 & \qquad + 2\mathrm{i}(e^{4\mathrm{i}\psi}-e^{-4\mathrm{i}\psi})\Rc f_{12}\left(\sin{\psi}, \xi^{+}(\phi,\psi)\right).
\end{align}
Using the Fourier series expansion for equations \eqref{2-tensor_P} and \eqref{2-tensor_Q}, we get the $n^{th}$ Fourier coefficients of $P(\phi,\psi)$ and $Q(\phi,\psi)$ as follows:
\begin{align}\label{Form of P_n}
P_n(\psi) = & 2((\Rc f_{11})_n-(\Rc f_{22})_n)(\sin{\psi})e^{-\mathrm{i}n\left(\psi-\frac{\pi}{2}\right)} + 4\sin(4\psi) (\Rc f_{12})_n(\sin{\psi})e^{\mathrm{i}n\left(\psi-\frac{\pi}{2}\right)} \nonumber\\
& \quad +2\cos(4\psi)((\Rc f_{11})_n-(\Rc f_{22})_n)(\sin{\psi})e^{\mathrm{i}n\left(\psi-\frac{\pi}{2}\right)}.\\
\label{Form of Q_n}
Q_n(\psi) = & 2\cos(4\psi)((\Rc f_{11})_n-(\Rc f_{22})_n  )(\sin{\psi})e^{-\mathrm{i}n\left(\psi-\frac{\pi}{2}\right)}-4\sin(4\psi)(\Rc f_{12})_n(\sin{\psi})e^{-\mathrm{i}n\left(\psi-\frac{\pi}{2}\right)}\nonumber\\
& \quad 
+ 2 \left((\Rc f_{11})_n - (\Rc f_{22})_n \right)(\sin{\psi})e^{\mathrm{i}n\left(\psi-\frac{\pi}{2}\right)}.
\end{align}
Multiply equation \eqref{Form of P_n} with $e^{-\mathrm{i}n\left(\psi-\frac{\pi}{2}\right)}$ and equation \eqref{Form of Q_n} with $e^{\mathrm{i}n\left(\psi-\frac{\pi}{2}\right)}$ then by adding them, we have 
\begin{align*}
T_n(\psi):&= e^{-\mathrm{i}n\left(\psi-\frac{\pi}{2}\right)} P_n(\psi) + e^{\mathrm{i}n\left(\psi-\frac{\pi}{2}\right)} Q_n(\psi)\\
&= 4 \left(\cos 2n\left(\psi-\frac{\pi}{2}\right)+ \cos4\psi\right)\left((\Rc f_{11})_n
- (\Rc f_{22})_n\right)(\sin{\psi}).
\end{align*}
Hence, we have the $n^{th}$ Fourier coefficient of $(\Rc f_{11}-\Rc f_{22})$ 
\begin{align}\label{ difference of f_(11)_and_f_(22)}
    ((\Rc f_{11})_n- (\Rc f_{22})_n)(\sin{\psi}) = \frac{T_n(\psi)}{4 (\cos 2n(\psi-\frac{\pi}{2})+ \cos4\psi)}.
\end{align}
 Using the Fourier expansion for equation \eqref{addition of L_T}, we get the $n^{th}$ Fourier coefficient of $R(\phi,\psi)$ as below
 \begin{align*}
     R_n(\psi) = 2\cos n\left(\psi-\frac{\pi}{2}\right)((\Rc f_{11})_n + (\Rc f_{22})_n)(\sin{\psi}).
\end{align*}
Therefore, the $n^{th}$ Fourier coefficient of $(\Rc f_{11}+\Rc f_{22})$
\begin{align}\label{addition of f_(11)_and_f_(22)}
   ((\Rc f_{11})_n+ (\Rc f_{22})_n)(\sin{\psi}) = \frac{R_n(\psi)}{2 \cos n(\psi-\frac{\pi}{2})}. 
 \end{align}
From equations \eqref{ difference of f_(11)_and_f_(22)} and \eqref{addition of f_(11)_and_f_(22)}, we get the $n^{th}$ Fourier coefficients of $\Rc f_{11} $ and  $\Rc f_{22}$ as follows
 \begin{align*}
(\Rc f_{11})_n(\sin\psi) = \frac{1}{2}\left[\frac{T_n(\psi)}{4 (\cos 2n(\psi-\frac{\pi}{2})+ \cos4\psi)} + \frac{R_n(\psi)}{2 \cos n(\psi-\frac{\pi}{2})}  \right],  \\ 
(\Rc f_{22})_n(\sin\psi) = \frac{1}{2}\left[\frac{R_n(\psi)}{2 \cos n(\psi-\frac{\pi}{2})} -\frac{T_n(\psi)}{4 (\cos 2n(\psi-\frac{\pi}{2})+ \cos4\psi)}  \right].
 \end{align*}
 We use \eqref{Form of P_n}, to get the following relation to compute the $n^{th}$ Fourier coefficient of $\Rc f_{12}$:
  \begin{align*}
(\Rc f_{12})_n(\sin\psi) &= \frac{1}{4\sin 4\psi} \bigg(e^{-\mathrm{i}n(\psi-\frac{\pi}{2})}P_n(\psi) - 2(\Rc f_{11})_n(\sin{\psi})\left[e^{-2\mathrm{i}n\left(\psi-\frac{\pi}{2}\right)}+ \cos4\psi\right] \\
& +2(\Rc f_{22})_n(\sin{\psi})\left[e^{-2\mathrm{i}n\left(\psi-\frac{\pi}{2}\right)}+\cos4\psi\right] \bigg).
\end{align*}
The above three equations are used to recover the $n^{th}$ Fourier coefficients of $f_{11}$, $f_{22}$ and $f_{12}$ by using any of the two formulas \eqref{Recovery formula-I} and \eqref{Recovery formula-II}. This completes the proof of the Theorem.
\end{proof}
\begin{remark}
We expect a similar approach can be applied for tensor fields of any order, but it will involve large calculations, and one needs to find a compact way to write these in order to present them. At this point, we do not have a way to write a proof for the general $m$. 
\end{remark}
\section{Proof of Theorem \ref{Recovery from longi/trans moments}}\label{sec: moments recovery}
This section focuses on the full reconstruction of a symmetric $m$-tensor field from the knowledge of the first $(m+1)$ moment longitudinal/transverse V-line transforms.
More specifically, we discuss the recovery for vector fields and symmetric $m$-tensor fields from their V-line longitudinal moments. The same result is true with a moment of V-line transverse transform as well. To avoid repetition, we present the proof only for the longitudinal case. 
\subsection{Vector fields}
In this subsection, we prove the Theorem \ref{Recovery from longi/trans moments} for a vector field. We show that the knowledge of longitudinal and first moment of longitudinal V-line transforms ($\Lc\vf,\Lc^1\vf$) uniquely recovers a vector field.

\begin{proof}[\textbf{Proof of Theorem \ref{Recovery from longi/trans moments} $(m=1)$}] Recall that a vector field $\vf$ can be decomposed into curl-free and divergence-free parts as follows: 
$$\vf = \D\eta + \D^\perp \chi.$$ 
We know the $\D\eta$ part is in the kernel of $\Lc$, and $\chi$ can be recovered explicitly in terms of $\Lc \vf$. To complete the proof of the theorem, we just need to show that $\eta$ can be recovered with the help of $\Lc^1 \vf$. Consider 
\begin{align}
& (\Lc^1\vf- \Lc^1(\D^\perp \chi))(\phi,\psi)\nonumber\\ &\quad = \Lc^1(\D \eta)(\phi,\psi) \nonumber\\
&\quad = \int_{-\infty}^{\infty} t \left\langle \vuu, \D\eta\left(\sin (\psi)\Phi\left(\xi^{-}(\phi,\psi)\right) + t\vuu\right)\right\rangle \,dt +\int_{-\infty}^{\infty} t \left\langle \vv, \D\eta\left(\sin(\psi)\Phi\left(\xi^{+}(\phi,\psi)\right)+ t\vv\right) \right\rangle \,dt\nonumber\\
&\quad=  \int_{-\infty}^{\infty}t\frac{d \eta}{dt}\left(\sin (\psi)\Phi\left(\xi^{-}(\phi,\psi)\right) + t\vuu\right)\,dt + \int_{-\infty}^{\infty} t \frac{d\eta}{dt}\left(\sin(\psi)\Phi\left(\xi^{+}(\phi,\psi)\right) + t\vv\right)\,dt \nonumber\\
&\quad=  -\int_{-\infty}^{\infty} \eta \left(\sin (\psi)\Phi\left(\xi^{-}(\phi,\psi)\right) + t\vuu\right)\,dt - \int_{-\infty}^{\infty}  \eta\left(\sin(\psi)\Phi\left(\xi^{+}(\phi,\psi)\right) + t\vv\right)\,dt \nonumber \\
&\quad=  -\Rc\eta\left(s,\xi^{-}(\phi,\psi)\right)  -\Rc\eta\left(s,\xi^{+}(\phi,\psi)\right)\nonumber \\
&\quad = - \Vc \eta(\phi, \psi).
\end{align}
Since the left-hand side of the above equation is known and hence we can recover $\eta$ explicitly by using any of the formulas \eqref{weighted Recovery formula-I} and \eqref{weighted Recovery formula-II}.
\end{proof}

\subsection{Symmetric \texorpdfstring{$m$}{m}-tensor fields}
In this subsection, we prove the Theorem \ref{Recovery from longi/trans moments} for symmetric $m$-tensor fields. We show that the knowledge of first $(m+1)$ moment longitudinal V-line transforms ($\Lc\vf,\Lc^1\vf,\dots,\Lc^m\vf$) uniquely determines a symmetric $m$-tensor field.
\begin{proof}[\textbf{Proof of Theorem \ref{Recovery from longi/trans moments}}] 
Again, let us start by recalling the following decomposition of symmetric $m$-tensor fields:
$$ \vf=  \sum_{j=0}^{m}(\D^{\perp})^{m-j}\D^{j}\chi^{(j)}.$$
We have seen above in the proof of Theorem \ref{Recovery from M_lf} that $\chi^{(0)}$ can be recovered explicitly in term of $\Lc \vf$ and the remaining part $\displaystyle \sum_{j=1}^{m}(\D^{\perp})^{m-j}\D^{j}\chi^{(j)}$ is in the kernel of $\Lc$. 
Also, one can notice that $\displaystyle \Lc^{k}\left(\sum_{\ell=k+1}^{m}\left((\D^{\perp})^{m-\ell}\D^{\ell}\chi^{(\ell)}\right)\right)=0,\  1\le k \le m.$ 
Therefore, using $\Lc^1 \vf$ and reconstructed $\chi^{(0)}$, we have
\begin{align*}
 &\left(\Lc^1\vf - \Lc^1\left((\D^\perp)^m\chi^{(0)}\right)\right)(\phi,\psi) = \Lc^1\left((\D^\perp)^{m-1} \D\chi^{(1)}\right)(\phi,\psi) \nonumber \\ 
 &\qquad \qquad =- \frac{\partial^{m-1}}{\partial s^{m-1}}\left[\Rc\chi^{(1)}\left(s,\xi^{-}(\phi,\psi)\right) +(-1)^{m-1}\Rc\chi^{(1)}\left(s,\xi^{+}(\phi,\psi)\right)\right].
\end{align*}
Integrating the above equation  $(m-1)$-times from $s$ to $\infty$, we get a weighted V-line transform of $\chi^{(1)}$ with $c_1 = (-1)^m$ and $c_2 = -1$. Therefore, using any of the formulas \eqref{weighted Recovery formula-I} and \eqref{weighted Recovery formula-II}, we can recover $\chi^{(1)}$ explicitly. \\
Next, using $\Lc^2\vf$ and reconstructed $\chi^{(0)},\chi^{(1)}$, we have
\begin{align*}
 &\left(\Lc^2\vf -\Lc^2\left((\D^\perp)^{m-1}\D\chi^{(1)}\right)- \Lc^2\left((\D^\perp)^m\chi^{(0)}\right)\right)(\phi,\psi) = \Lc^2\left((\D^\perp)^{m-2} \D^2\chi^{(2)}\right)(\phi,\psi)\nonumber \\
 &\qquad \qquad = 2!\frac{\partial^{m-2}}{\partial s^{m-2}}\bigg[\Rc\chi^{(2)}\left(s,\xi^{-}(\phi,\psi)\right)+(-1)^{m-2}\Rc\chi^{(2)}\left(s,\xi^{+}(\phi,\psi)\right)\bigg].
\end{align*}
Integrating the above equation $(m-2)$-times from $s$ to $\infty$, we get a weighted V-line transform of $\chi^{(2)}$ with $c_1 = 2(-1)^{m}$ and $c_2 = 2$. Therefore, using any of the formulas \eqref{weighted Recovery formula-I} and \eqref{weighted Recovery formula-II}, we can recover $\chi^{(2)}$ explicitly. \\
Repeating the same process $(k-1)$-more times, we obtain $\chi^{(0)}, \chi^{(1)}, \dots, \chi^{(k-1)}$. For the recovery of $\chi^{(k)}$, we use $\Lc^k\vf$ and known $\chi^{(0)}, \chi^{(1)}, \dots, \chi^{(k-1)}$ to write the following relation:
\begin{align}\label{moment chi_k recovery}
 &\left(\Lc^k\vf - \Lc^k\left((\D^\perp)^{m-(k-1)}\D^{(k-1)}\chi^{(k-1)}\right)\dots - \Lc^{k}\left((\D^\perp)^m\chi^{(0)}\right)\right)(\phi,\psi) = \Lc^k\left((\D^\perp)^{m-k} \D^k\chi^{(k)}\right)(\phi,\psi) \nonumber \\
 &\qquad \qquad = (-1)^{k}k!\frac{\partial^{m-k}}{\partial s^{m-k}}\bigg[\Rc\chi^{(k)}\left(s,\xi^{-}(\phi,\psi)\right) +(-1)^{m-k}\Rc\chi^{(k)}\left(s,\xi^{+}(\phi,\psi)\right)\bigg].
\end{align}
Integrating the above equation $(m-k)$-times from $s$ to $\infty$, we get a weighted V-line transform of $\chi^{(k)}$ with $c_1 = k!(-1)^{m}$ and $c_2 = k!(-1)^k$. Therefore, using any of the formulas \eqref{weighted Recovery formula-I} and \eqref{weighted Recovery formula-II}, we can recover $\chi^{(k)}$, $3\leq k \leq m$ explicitly. This completes the proof of the theorem.
\end{proof}
\section{Acknowledgements}\label{sec: acknowledge}We thank Gaik Ambartsoumian for his suggestions, which helped us improve the article. RM was partially supported by SERB SRG grant No. SRG/2022/000947. AP is supported by UGC, Government of India, with a research fellowship. IZ is supported by the Prime Minister's Research Fellowship from the Government of India.
\bibliographystyle{plain}
\bibliography{references}

\end{document}